\in\mathbb{N}$ on the unit interval $(0, 1]$ is
\begin{document}

This paper addresses the question of what type of probability
distributions\footnote{The terms ``probability distribution'' and
``random variable'' are used interchangeably in this paper.} one can
obtain by considering products of ratios of Barnes multiple gamma
functions \cite{mBarnes}. Classically, it is well-known that the
Mellin transform\footnote{It is more natural to define the Mellin
transform as $\int_0^\infty x^q\,f(x)\,dx$ as opposed to the usual
$\int_0^\infty x^{q-1}\,f(x)\,dx$ for our purposes.} of the beta
distribution $\beta$ with parameters $b_0, b_1>0$ and density
$\Gamma(b_0+b_1)/\Gamma(b_0)\Gamma(b_1) \, x^{b_0-1}(1-x)^{b_1-1}$
is given by
\begin{equation}\nonumber
{\bf E}[\beta^q] =
\frac{\Gamma(q+b_0)}{\Gamma(b_0)}\frac{\Gamma(b_0+b_1)}{\Gamma(q+b_0+b_1)},
\,\,\Re(q)>-b_0.
\end{equation}

The main contribution of this paper is to construct and study the
main properties of a novel family of probability distributions on
the unit interval $(0, 1]$ that naturally generalize the beta
distribution to arbitrary multiple gamma functions. In particular,
we show that all of these distributions have infinitely divisible
logarithm, satisfy a functional equation and several symmetries, and
admit a remarkable infinite-product factorization. We call them
Barnes beta distributions.

Our paper contributes to several areas of current interest in
probability theory. First, we contribute to the probabilistic study
of Barnes multiple gamma functions and, more generally, the study of
infinite divisibility in the context of special functions of
analytic number theory complementing \cite{BiaPitYor}, \cite{Jacod},
\cite{LagRains}, \cite{NikYor}. We show that a new class of
L\'evy-Khinchine representations is naturally associated with
multiple gamma functions. Moreover, the meromorphic functions that
we introduce as the Mellin transform of the Barnes beta
distributions appear to have a number analytic significance as their
pole structure depends on the rationality of the parameters of the
distribution.

Second, there is a long-standing interest in the literature in the
study of Dufresne distributions, whose defining property is that
their Mellin transform is given in the form of a product of ratios
of Euler's gamma functions, confer \cite{ChaLet}, \cite{Duf10}, and
references therein. In addition, there have recently appeared a
series of papers \cite{HubKuz}, \cite{Kuz}, \cite{KuzPar} that
computed the Mellin transform of a certain functional of the stable
process in the form of a product of ratios of Alexeiewsky-Barnes
$G-$functions (or, equivalently, the double gamma function). In our
own work on the Selberg integral \cite{Me} we introduced a different
probability distribution having the same property that its Mellin
transform is given in the form of a finite product of ratios of
$G-$factors. The contribution of this paper is to show that there is
a whole family of Barnes beta distributions on the unit interval
$(0, 1]$ that extends this property to arbitrary multiple gamma
functions.

Third, we contribute to the probabilistic theory of the Selberg
integral complementing \cite{Yor}. We show that the Selberg integral
extends as a function of its dimension to the Mellin transform of a
probability distribution, which factorizes in terms of $\beta_{2,
2}^{-1}s.$ This leads us to a new interpretation of the Selberg
integral. 


As an application of our results, we introduce a novel class of
power series, compute their Mellin transform, and prove their
positivity by relating them to the Laplace transform of the Barnes
beta distribution.

The main technical tool that we rely on in this paper is the
remarkable approach to multiple gamma functions due to Ruijsenaars
\cite{Ruij}. Ruijsenaars developed a novel Malmst\'en-type formula
for a class of functions that includes the multiple log-gamma
function as a special case. We prove the key infinitely divisibility
property in complete generality, that is for the whole Ruijsenaars
class, before specializing to the gamma functions. Most of our
proofs are elementary as the strength of his approach allows us to
reduce our arguments to simple properties of multiple Bernoulli
polynomials. 

The plan of the paper is as follows. In Section 1 we remind the
reader of the basic properties of the Barnes gamma functions
following \cite{mBarnes} and \cite{Ruij}. In Section 2 we state our
results. Section 3 gives examples of Barnes beta distributions.
Section 4 explains the connection between $\beta_{1,1},$
$\beta_{2,2},$ and the Selberg integral.
In Section 5 we present the proofs. Section 6
concludes with a summary.

\section{Review of Multiple Gamma Functions}

Let $f(t)$ be of the Ruijsenaars class, \emph{i.e.} analytic for
$\Re(t)>0$ and at $t=0$ and of at worst polynomial growth as
$t\rightarrow \infty,$ confer \cite{Ruij}, Section 2. The main
example that corresponds to the case of Barnes multiple gamma
functions is
\begin{equation}\label{fdef}
f(t) = t^M \prod\limits_{j=1}^M (1-e^{-a_j t})^{-1}
\end{equation}
for some integer $M\geq 0$ and parameters $a_j>0,$ $j=1\cdots M.$
For concreteness, the reader can assume with little loss of
generality that $f(t)$ is defined by \eqref{fdef}. Slightly
modifying the definition in \cite{Ruij}, we define generalized
Bernoulli polynomials by
\begin{equation}\label{Bdefa}
B^{(f)}_{m}(x) \triangleq \frac{d^m}{dt^m}|_{t=0} \bigl[f(t)
e^{-xt}\bigr].
\end{equation}
The generalized zeta function is defined by
\begin{equation}\label{zdef}
\zeta_M(s, \,w) \triangleq \frac{1}{\Gamma(s)} \int\limits_0^\infty
t^{s-1} e^{-wt}\,f(t) \,\frac{dt}{t^M}, \,\,\Re(s)>M,\,\Re(w)>0.
\end{equation}
It is shown in \cite{Ruij} that $\zeta_M(s, \,w)$ has an analytic
continuation to a function that is meromorphic in $s\in\mathbb{C}$
with simple poles at $s=1, 2, \cdots M.$ The generalized log-gamma
function is then defined by
\begin{equation}\label{Ldef}
L_M(w) \triangleq \partial_s \zeta_M(s, \,w)|_{s=0}, \,\,\Re(w)>0.
\end{equation}
It can be analytically continued to a function that is holomorphic
over $\mathbb{C}-(-\infty, 0].$ The key results of \cite{Ruij} that
we need are summarized in the following theorem.
\begin{theorem}[Ruijsenaars]\label{R}
$L_M(w)$ satisfies the Malmst\'en-type formula for $\Re(w)>0,$
\begin{equation}\label{key}
L_M(w) = \int\limits_0^\infty \frac{dt}{t^{M+1}} \Bigl(
e^{-wt}\,f(t) - \sum\limits_{k=0}^{M-1} \frac{t^k}{k!}\,B^{(f)}_k(w)
- \frac{t^M\,e^{-t}}{M!}\, B^{(f)}_M(w)\Bigr).
\end{equation}
$L_M(w)$ satisfies the asymptotic expansion,
\begin{gather}
L_M(w) = -\frac{1}{M!} B^{(f)}_M(w)\,\log(w) + \sum\limits_{k=0}^M
\frac{B^{(f)}_k(0) (-w)^{M-k}}{k!(M-k)!}\sum\limits_{l=1}^{M-k}
\frac{1}{l} + R_M(w), \label{asym}\\
R_M(w) = O(w^{-1}), \,|w|\rightarrow\infty, \, |\arg(w)|<\pi.
\label{asymremainder}
\end{gather}
\end{theorem}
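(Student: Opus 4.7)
The theorem has two independent parts: the Malmst\'en-type integral formula \eqref{key} and the asymptotic expansion \eqref{asym}. Both follow from the definition $L_M(w) = \partial_s\zeta_M(s,w)|_{s=0}$ once $\zeta_M(s,w)$ has been analytically continued to a neighborhood of $s=0$.

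For \eqref{key}, my plan is to split the defining integral \eqref{zdef} at $t=1$. The tail $\int_1^\infty t^{s-M-1}e^{-wt}f(t)\,dt$ is entire in $s$ thanks to the exponential decay of $e^{-wt}$ combined with the polynomial growth of $f$. On $[0,1]$, I subtract the Taylor polynomial $\sum_{k=0}^{M}\frac{t^k}{k!}B_k^{(f)}(w)$ of $e^{-wt}f(t)$; the remainder $r_M(t,w) = O(t^{M+1})$ makes $\int_0^1 t^{s-M-1}r_M(t,w)\,dt$ analytic for $\Re(s)>-1$, while the subtracted monomial integrals $\int_0^1 t^{s-M+k-1}\,dt = (s-M+k)^{-1}$ expose the simple poles of $\Gamma(s)\zeta_M(s,w)$ at $s = 0,1,\dots,M$. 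Inserting $1/\Gamma(s) = s + \gamma s^2 + O(s^3)$ and computing $\partial_s|_{s=0}$ recovers \eqref{key} after (i) recombining the $[0,1]$ and $[1,\infty)$ pieces into one integral over $(0,\infty)$, which forces the smoothed subtraction $\frac{t^M e^{-t}}{M!}B_M^{(f)}(w)$ in place of the non-integrable polynomial $\frac{t^M}{M!}B_M^{(f)}(w)$, and (ii) absorbing the Euler constant $\gamma$ via the classical identity $\gamma = \int_0^1 (1-e^{-t})\,t^{-1}\,dt - \int_1^\infty e^{-t}\,t^{-1}\,dt$.

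For \eqref{asym}, I would instead rescale $t = u/w$ in \eqref{zdef} (initially for $\Re(s) > M$ and real positive $w$) to obtain
\begin{equation}\nonumber
\zeta_M(s,w) = \frac{w^{M-s}}{\Gamma(s)}\int_0^\infty u^{s-M-1}e^{-u}f(u/w)\,du.
\end{equation}
Expanding $f(u/w) = \sum_{k=0}^N \frac{B_k^{(f)}(0)}{k!}(u/w)^k + O((u/w)^{N+1})$ and integrating term by term yields, up to a controlled remainder,
\begin{equation}\nonumber
\zeta_M(s,w) \sim \sum_{k=0}^{N}\frac{B_k^{(f)}(0)}{k!}\, w^{M-k-s}\,\frac{\Gamma(s+k-M)}{\Gamma(s)}.
\end{equation}
Taking $\partial_s|_{s=0}$, each term produces a $-w^{M-k}\log(w)$ piece (from the $\Gamma$-ratio at $s=0$) and a $w^{M-k}$ piece (from its $s$-derivative). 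The key combinatorial step is the identity $\Gamma(s+k-M)/\Gamma(s) = \prod_{j=0}^{M-k-1}(s-M+k+j)^{-1}$ for $k \leq M$, whose value at $s=0$ is $(-1)^{M-k}/(M-k)!$ and whose logarithmic derivative at $s=0$ equals $\sum_{l=1}^{M-k} 1/l$. Summing over $k \leq M$ and using $B_M^{(f)}(w) = \sum_{k=0}^M \binom{M}{k}B_k^{(f)}(0)(-w)^{M-k}$ recovers exactly the first two terms of \eqref{asym}, while $k > M$ assembles into $R_M(w)$, with leading term of order $w^{-1}$ coming from $k = M+1$.

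The main obstacle is the rigorous justification of the term-by-term expansion in the second part: for the example \eqref{fdef}, the Taylor series of $f$ has finite radius of convergence $2\pi/\max_j a_j$, so the expansion in $1/w$ is only asymptotic. One controls it by splitting the $u$-integration at $|u| = \delta|w|$ for small $\delta$, applying Taylor's theorem with quantitative remainder on $|u| \leq \delta|w|$, and bounding the tail $|u| \geq \delta|w|$ using the exponential decay of $e^{-u}$ against the polynomial growth of $f$. Uniformity in $|\arg w| < \pi$, needed for \eqref{asymremainder}, follows by rotating the $u$-contour through $\arg w$ using the analyticity of $f$ in a half-plane.
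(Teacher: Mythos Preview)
The paper does not prove Theorem~\ref{R}. It is stated in Section~1 as a summary of results taken from Ruijsenaars \cite{Ruij}, introduced by ``The key results of \cite{Ruij} that we need are summarized in the following theorem,'' and no argument for it appears anywhere; the proofs in Section~5 begin with Theorem~\ref{main} and take \eqref{key} and \eqref{asym} as input. So there is no proof in the paper to compare your proposal against.

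That said, your sketch is a correct reconstruction of the standard argument and is essentially what Ruijsenaars does. The Malmst\'en part---split at $t=1$, subtract the degree-$M$ Taylor jet of $e^{-wt}f(t)$ to expose the simple poles of $\Gamma(s)\zeta_M(s,w)$ at $s=0,\dots,M$, differentiate at $s=0$, and reassemble using the $e^{-t}$-smoothed top term together with the integral identity for $\gamma$---is right. For the asymptotic part, the rescaling $t=u/w$, the $\Gamma$-ratio computation $\Gamma(s+k-M)/\Gamma(s)|_{s=0}=(-1)^{M-k}/(M-k)!$ with logarithmic derivative $\sum_{l=1}^{M-k}1/l$, and the Leibniz identity $B_M^{(f)}(w)=\sum_{k=0}^M\binom{M}{k}B_k^{(f)}(0)(-w)^{M-k}$ are all correct and give exactly \eqref{asym}. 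One small point to tighten: the termwise integrals $\int_0^\infty u^{s+k-M-1}e^{-u}\,du$ are only literally convergent for $\Re(s)>M-k$, so the expansion you wrote is really obtained after the analytic continuation you already performed in the first part (equivalently, by interpreting each term via its $\Gamma$-function continuation, which your $\Gamma$-ratio manipulation implicitly does). Your contour-rotation remark for uniformity in $|\arg w|<\pi$ is the right mechanism for \eqref{asymremainder}.
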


In the special case of the function $f(t)$ being defined by
\eqref{fdef}, the generalized zeta and gamma functions have
important additional properties. It is not difficult to show that
\eqref{zdef} becomes
\begin{equation}\label{mzdef}
\zeta_M\bigl(s,\,w\,|\,a\bigr) =
\sum\limits_{k_1,\cdots,k_M=0}^\infty \bigl(w+k_1 a_1+\cdots+k_M
a_M\bigr)^{-s},\,\,\Re(s)>M,\,\Re(w)>0,
\end{equation}
for $a=(a_1,\cdots, a_M),$ which is the formula given originally by
Barnes \cite{mBarnes} for the multiple zeta function. Let
$L_M(w\,|\,a)$ be defined by \eqref{Ldef} with $f(t)$ as in
\eqref{fdef}. Now, following \cite{Ruij}, define\footnote{Barnes
\cite{mBarnes} used a slightly different normalization, which does
not affect our results as we are primarily interested in ratios of
Barnes gamma functions.} the Barnes multiple gamma function by
\begin{equation}\label{mgamma}
\Gamma_M(w\,|\,a) \triangleq \exp\bigl(L_M(w\,|\,a)\bigr).
\end{equation}
It follows from \eqref{mzdef} and \eqref{mgamma} that
$\Gamma_M(w\,|\,a)$ satisfies the fundamental functional equation
\begin{equation}\label{feq}
\Gamma_{M}(w\,|\,a) =
\Gamma_{M-1}(w\,|\,\hat{a}_i)\,\Gamma_M\bigl(w+a_i\,|\,a\bigr),\,i=1\cdots
M, \,\,M=1, 2, 3\cdots,
\end{equation}
$\hat{a}_i = (a_1,\cdots, a_{i-1},\,a_{i+1},\cdots, a_{M}),$ and
$\Gamma_0(w) = 1/w,$ which is also due to \cite{mBarnes}.
By iterating \eqref{feq} one sees that $\Gamma_M(w\,|\,a)$ is
meromorphic over $\mathbb{C}$ having no zeroes and poles at
\begin{equation}\label{poles}
w=-(k_1 a_1+\cdots + k_M a_M),\; k_1\cdots k_M\in\mathbb{N},
\end{equation}
with multiplicity equal the number of $M-$tuples $(k_1, \cdots,
k_M)$ that satisfy \eqref{poles}.

We conclude our review of the Barnes functions by relating the
general results to the classical case of Euler's gamma and Hurwitz's
zeta functions. Following \cite{Ruij}, we have the identities
\begin{gather}
\Gamma_1(w\,|\,a) = \frac{a^{w/a-1/2}}{\sqrt{2\pi}} \,\Gamma(w/a), \label{gamma1}\\
\zeta_1(s, w\,|\,a) = a^{-s}\zeta(s,\,w/a),
\end{gather}
and the asymptotic expansion in Theorem 1.1 becomes Stirling's
series.

\section{Barnes Beta Distribution}
In this section we will define and describe the main properties of
what we call Barnes beta distributions.  We begin by introducing a
combinatorial operator $\mathcal{S}_N$ that plays a central role in
the formulation of our results.

Let $\lbrace b_k\rbrace,$ $k\in\mathbb{N}$ be a sequence of positive
real numbers and $N, M\in\mathbb{N}.$
Let the symbol $\sum\limits_{k_1<\cdots<k_p=1}^N$ denote the sum
over all indices $k_i=1\cdots N,$ $i=1\cdots p,$ satisfying
$k_1<\cdots<k_p.$ Define the action of the operator $\mathcal{S}_N$
by
\begin{definition}\label{Soperator}
\begin{equation}\label{S}
(\mathcal{S}_Nf)(q\,|\,b) \triangleq \sum\limits_{p=0}^N (-1)^p
\sum\limits_{k_1<\cdots<k_p=1}^N
f\bigl(q+b_0+b_{k_1}+\cdots+b_{k_p}\bigr).
\end{equation}
\end{definition}
\noindent In other words, in \eqref{S} the action of $\mathcal{S}_N$
is defined as an alternating sum over all combinations of $p$
elements for every $p=0\cdots N.$ Given a function $f(t)$ of
Ruijsenaars class, confer Section 1, such that $f(t)>0$ for $t\geq
0,$ let $L_M(w)$ be the corresponding generalized log-gamma function
defined in \eqref{Ldef}. The main example is the function $f(t)$ in
\eqref{fdef} so that $L_M(w)=L_M(w\,|\,a)$ is the Barnes multiple
log-gamma function. We can now define the main object that we will
study in this paper.
\begin{definition}\label{bdef}
Given $q\in\mathbb{C}-(-\infty, -b_0],$ let
\begin{equation}\label{eta}
\eta_{M,N}(q\,|\,b) \triangleq \exp\Bigl(\bigl(\mathcal{S}_N
L_M\bigr)(q\,|\,b) - \bigl(\mathcal{S}_N L_M\bigr)(0\,|\,b)\Bigr).
\end{equation}
\end{definition}
\noindent The function $\eta_{M,N}(q\,|\,b)$ is holomorphic over
$q\in\mathbb{C}-(-\infty, -b_0]$ and equals a product of ratios of
generalized gamma functions by construction. Denoting
$\Gamma_M(w)=\exp\bigl(L_M(w)\bigr),$ it is easy to write out
examples of $\eta_{M,N}(q\,|\,b)$ for small $N.$
\begin{example}
\begin{gather}
\eta_{M, 0}(q\,|\,b) = \frac{\Gamma_M(q+b_0)}{\Gamma_M(b_0)}, \;
\eta_{M, 1}(q\,|\,b) = \frac{\Gamma_M(q+b_0)}{\Gamma_M(b_0)}
\frac{\Gamma_M(b_0+b_1)}{\Gamma_M(q+b_0+b_1)},
\\ \eta_{M, 2}(q\,|\,b)
= \frac{\Gamma_M(q+b_0)}{\Gamma_M(b_0)}
\frac{\Gamma_M(b_0+b_1)}{\Gamma_M(q+b_0+b_1)}
\frac{\Gamma_M(b_0+b_2)}{\Gamma_M(q+b_0+b_2)}
\frac{\Gamma_M(q+b_0+b_1+b_2)}{\Gamma_M(b_0+b_1+b_2)}.
\end{gather}
\end{example}

We now proceed to state our results.\footnote{Our results in the
case of $M=1$ correspond to a special case of the theory of Dufresne
distributions (also known as G distributions) and were first
obtained in \cite{Duf10}. The case of $M=N=2$ first appeared in
\cite{Me}.} We begin with the general case and then specialize to
that of multiple gamma functions.
\begin{theorem}[Existence]\label{main}
Given $M, N\in\mathbb{N}$ such that $M\leq N,$ the function
$\eta_{M,N}(q\,|\,b)$ is the Mellin transform of a probability
distribution on $(0, 1].$ Denote it by $\beta_{M, N}(b).$ Then,
\begin{equation}
{\bf E}\bigl[\beta_{M, N}(b)^q\bigr] = \eta_{M, N}(q\,|\,b),\;
\Re(q)>-b_0.
\end{equation}
The distribution $-\log\beta_{M, N}(b)$ is infinitely divisible on
$[0, \infty)$ and has the L\'evy-Khinchine decomposition
\begin{equation}\label{LKH}
{\bf E}\Bigl[\exp\bigl(-q\log\beta_{M, N}(b)\bigr)\Bigr] =
\exp\Bigl(\int\limits_0^\infty (e^{tq}-1) e^{-b_0
t}\prod\limits_{j=1}^N (1-e^{-b_j t}) \frac{f(t)}{t^{M+1}} dt\Bigr),
\; \Re(q)<b_0.
\end{equation}
\end{theorem}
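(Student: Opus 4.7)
The plan is to apply Ruijsenaars' formula \eqref{key} to rewrite $\log\eta_{M,N}(q\,|\,b)=(\mathcal{S}_N L_M)(q\,|\,b)-(\mathcal{S}_N L_M)(0\,|\,b)$ as an explicit L\'evy-Khinchine exponent of a subordinator, and then invoke the L\'evy-Khinchine correspondence to obtain the distribution. Two algebraic facts drive the computation. First, setting $T_j g(w) := g(w) - g(w+b_j)$, one checks $(\mathcal{S}_N g)(q\,|\,b) = (T_1 T_2 \cdots T_N g)(q+b_0)$, so $\mathcal{S}_N$ is an $N$-fold first-order finite-difference operator; consequently it annihilates every polynomial in $w$ of degree strictly less than $N$ and carries a polynomial of degree exactly $N$ to a constant independent of $q$. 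Second, acting on the exponential,
\begin{equation*}
(\mathcal{S}_N e^{-wt})(q\,|\,b) = e^{-(q+b_0)t}\prod_{j=1}^N (1-e^{-b_j t}).
\end{equation*}

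Next I apply $\mathcal{S}_N$ inside the integral \eqref{key} and subtract the value at $q=0$. Each Bernoulli correction $B_k^{(f)}(w)$ has degree $k\leq M\leq N$ in $w$, so its contribution either vanishes outright (when $k\leq M<N$) or reduces to a $q$-independent constant that is cancelled by the subtraction (when $k=M=N$). The surviving exponential term yields
\begin{equation*}
\log\eta_{M,N}(q\,|\,b) = \int_0^\infty (e^{-qt}-1)\,e^{-b_0 t}\prod_{j=1}^N (1-e^{-b_j t})\,\frac{f(t)}{t^{M+1}}\,dt,\quad \Re(q)>-b_0,
\end{equation*}
which, after the substitution $q\mapsto -q$, is exactly \eqref{LKH}. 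I then verify that $\nu(dt) := e^{-b_0 t}\prod_j(1-e^{-b_j t})\,f(t)\,t^{-M-1}\,dt$ is a subordinator L\'evy measure: near $t=0$ the density scales like $t^{N-M}$, so $\int_0^1(1\wedge t)\,\nu(dt)<\infty$ precisely under the hypothesis $M\leq N$, while at infinity $e^{-b_0 t}$ dominates the polynomial growth of $f$. The L\'evy-Khinchine theorem therefore produces an infinitely divisible random variable $X\geq 0$ with $\mathbf{E}[e^{-qX}]=\eta_{M,N}(q\,|\,b)$ on $\Re(q)\geq 0$. Setting $\beta_{M,N}(b):=e^{-X}\in(0,1]$ makes $\eta_{M,N}$ its Mellin transform on the real axis, and the identity extends to $\Re(q)>-b_0$ by analyticity of both sides.

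The principal obstacle is the termwise application of $\mathcal{S}_N$ inside \eqref{key} in the borderline case $M=N$: one cannot simply split the integrand into an exponential piece and a Bernoulli piece, because the latter is not absolutely integrable at $t=0$ once multiplied by $t^{-M-1}$. What rescues the argument is that the subtraction built into Ruijsenaars' formula is engineered precisely so that the $t^N$ singularity of $\mathcal{S}_N[e^{-wt}f(t)]$ cancels against the $t^N$ singularity of $\mathcal{S}_N[\tfrac{t^M e^{-t}}{M!}B_M^{(f)}(w)]$, producing a combined integrand of order $O(1)$ at the origin. Once this cancellation is checked, the remaining steps reduce to elementary finite-difference combinatorics and standard properties of subordinators.
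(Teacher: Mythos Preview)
Your proposal is correct and follows essentially the same route as the paper: apply Ruijsenaars' Malmst\'en-type formula, use that $\mathcal{S}_N$ kills the Bernoulli corrections (the paper packages this as Lemma~\ref{mylemma} and Corollary~\ref{mainauxID}, while you phrase it as $\mathcal{S}_N=T_1\cdots T_N$ annihilating polynomials of degree $<N$), compute $\mathcal{S}_N$ on the exponential, and recognize the resulting integral as the L\'evy--Khinchine exponent of a subordinator. Your explicit discussion of the $M=N$ borderline case is a helpful addition the paper glosses over, but it does not change the argument.
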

\begin{corollary}[Structure]\label{Structure}
If $M=N,$ $\log\beta_{M,N}(b)$ is absolutely continuous. If $M<N,$
$-\log\beta_{M,N}(b)$ is compound Poisson and
\begin{equation}
{\bf P}\bigl[\beta_{M,N}(b)=1\bigr] =
\exp\Bigl(-\int\limits_0^\infty e^{-b_0 t}\prod\limits_{j=1}^N
(1-e^{-b_j t}) \frac{f(t)}{t^{M+1}} dt\Bigr).
\end{equation}
\end{corollary}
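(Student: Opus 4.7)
The plan is to read off the L\'evy measure of the subordinator $-\log\beta_{M,N}(b)$ from Theorem~\ref{main} and then separate the two regimes by testing its total mass. Writing
$$\rho(t) := e^{-b_0 t}\prod_{j=1}^N(1-e^{-b_j t})\,\frac{f(t)}{t^{M+1}},\qquad t>0,$$
the representation \eqref{LKH} exhibits $-\log\beta_{M,N}(b)$ as a driftless subordinator at time one with L\'evy measure $\rho(t)\,dt$. Integrability of $\rho$ at $\infty$ is immediate from $e^{-b_0 t}$ dominating the at-worst polynomial growth of $f$, while near $0$ analyticity of $f$ with $f(0)>0$ (for instance $f(0)=1/(a_1\cdots a_M)$ in the Barnes case) combined with $\prod_{j=1}^N(1-e^{-b_j t})\sim t^N\prod_{j=1}^N b_j$ gives $\rho(t)\sim f(0)\prod_{j=1}^N b_j\cdot t^{N-M-1}$ as $t\downarrow 0$. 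Hence $\lambda := \int_0^\infty\rho(t)\,dt<\infty$ iff $M<N$, whereas for $M=N$ one has $\rho(t)\sim c_0/t$ at $0$ with $c_0:=f(0)\prod_{j=1}^N b_j>0$ and $\lambda=\infty$.

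In the case $M<N$, I would rewrite \eqref{LKH} as $\exp\bigl(\lambda(\hat\mu(q)-1)\bigr)$, where $\hat\mu$ is the Laplace transform of the probability measure $\mu(dt):=\lambda^{-1}\rho(t)\,dt$ on $(0,\infty)$. This is the Laplace exponent of a random sum $\sum_{i=1}^{N_\lambda}X_i$ with $N_\lambda\sim\mathrm{Poisson}(\lambda)$ independent of i.i.d.\ $X_i\sim\mu$, identifying $-\log\beta_{M,N}(b)$ as compound Poisson. Its atom at $0$ equals $\mathbf{P}[N_\lambda=0]=e^{-\lambda}$, which via $\beta_{M,N}(b)=1\iff-\log\beta_{M,N}(b)=0$ translates into the stated formula for $\mathbf{P}[\beta_{M,N}(b)=1]$.

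In the case $M=N$, the L\'evy measure $\rho(t)\,dt$ is absolutely continuous, has infinite total mass, and has strictly positive density on every punctured neighborhood of the origin. I would then conclude absolute continuity of $\log\beta_{M,N}(b)$ by invoking the classical Hartman-Wintner criterion: an infinitely divisible law on $\mathbb{R}$ whose L\'evy measure has infinite total mass and a strictly positive absolutely continuous component near $0$ is itself absolutely continuous with respect to Lebesgue measure. This is the main obstacle of the proof: infinite L\'evy mass by itself only rules out atoms and leaves the continuous singular alternative open, so one really does need the extra information that $\rho(t)\sim c_0/t$ at $0$---the hallmark of a gamma-type L\'evy measure---to separate absolute continuity from the singular case.
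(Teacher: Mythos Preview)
Your proposal is correct and follows essentially the same route as the paper: extract the L\'evy density $\rho(t)\,dt$ from \eqref{LKH}, test whether $\int_0^\infty\rho(t)\,dt$ is finite via the $t\downarrow 0$ asymptotic $\rho(t)\sim f(0)\prod_j b_j\,t^{N-M-1}$, then read off the compound Poisson structure and the atom $e^{-\lambda}$ when $M<N$, and invoke an absolute-continuity criterion for infinitely divisible laws with infinite absolutely continuous L\'evy mass when $M=N$. The paper does exactly this, citing Propositions~4.4 and~4.13 in Chapter~3 of Steutel--van~Harn rather than the Hartman--Wintner name you use; the underlying analytic fact (an infinitely divisible law whose L\'evy measure has an absolutely continuous part of infinite mass is itself absolutely continuous) is the same, though it is more commonly attributed to Tucker.
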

\begin{theorem}[Asymptotics]\label{Asymptotics}
If $M<N$ and $|\arg(q)|<\pi,$
\begin{equation}\label{ourasym}
\lim\limits_{q\rightarrow\infty} \eta_{M, N}(q\,|\,b) =
\exp\bigl(-(\mathcal{S}_N L_M)(0\,|\,b)\bigr).
\end{equation}
If $M=N$ and $|\arg(q)|<\pi,$
\begin{equation}\label{ourasymN}
\eta_{N, N}(q\,|\,b) = \exp\bigl(-b_1\cdots b_N f(0)\log(q) +
O(1)\bigr), \;q\rightarrow\infty.
\end{equation}
\end{theorem}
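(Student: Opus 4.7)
The plan is to derive both asymptotics directly from the expansion \eqref{asym}--\eqref{asymremainder} of $L_M$ in Theorem \ref{R}, combined with the key combinatorial identity
\begin{equation*}
(\mathcal{S}_N g)(q\,|\,b) \;=\; \prod_{j=1}^N (I - E_{b_j})\, g(w)\,\Big|_{w = q+b_0},
\end{equation*}
where $E_h$ denotes the shift $E_h g(w) = g(w+h)$ and the identity follows by expanding the product over subsets $S\subseteq\{1,\ldots,N\}$. Its key consequence is that $\mathcal{S}_N$ annihilates every polynomial in $w$ of degree strictly less than $N$ and sends $w^N$ to the constant $(-1)^N N!\,b_1\cdots b_N$. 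Substituting \eqref{asym} into $(\mathcal{S}_N L_M)(q\,|\,b)$ then splits the problem into three pieces: a log piece $-\tfrac{1}{M!}\mathcal{S}_N[B_M^{(f)}(w)\log w]$, a polynomial piece $\mathcal{S}_N P$ with $\deg P = M$, and a remainder piece $\mathcal{S}_N R_M$.

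Since $\deg P\le N$, the polynomial piece vanishes when $M<N$ and reduces to a $q$-independent constant when $M=N$. For the remainder piece, each of the finitely many shifted arguments $q + b_0 + b_{k_1}+\cdots+b_{k_p}$ lies in the sector $|\arg(\,\cdot\,)|<\pi$ once $|q|$ is sufficiently large, so \eqref{asymremainder} yields $\mathcal{S}_N R_M(q\,|\,b) = O(1/q)$.

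The log piece is the heart of the argument. I will evaluate it via the integral form of the $N$-fold difference
\begin{equation*}
\prod_{j=1}^N (I - E_{b_j})\, g(w) \;=\; (-1)^N \int_0^{b_1}\!\cdots\int_0^{b_N} g^{(N)}(w + s_1+\cdots+s_N)\,ds_1\cdots ds_N,
\end{equation*}
obtained by iterating $(I-E_h)g(w)=-\int_0^h g'(w+s)\,ds$, applied term-by-term to the monomial expansion of $B_M^{(f)}(w)\log w$. By the Leibniz rule, $\frac{d^N}{dw^N}[w^j\log w] = O(w^{j-N})$ for each $j<N$, so the contribution of $w^j\log w$ is $O(q^{j-N}) = o(1)$ as $q\to\infty$ uniformly in any closed subsector $|\arg q|\le\pi-\varepsilon$. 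In the borderline case $j = N$, which can occur only when $M=N$, the Leibniz rule yields $\frac{d^N}{dw^N}[w^N\log w] = N!\log w + c$ for an explicit constant $c$; since $\log(q+b_0+s_1+\cdots+s_N) = \log q + O(1/q)$ uniformly in the sector, the $N$-fold integral evaluates to $(-1)^N N!\,b_1\cdots b_N \log q + O(1)$. The leading coefficient of $B_N^{(f)}(w)$ is $(-1)^N f(0)$, so after multiplication by $-1/N!$ the log piece contributes $-f(0)\,b_1\cdots b_N\log q + O(1)$ when $M=N$ and $o(1)$ when $M<N$.

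Collecting: when $M<N$, $(\mathcal{S}_N L_M)(q\,|\,b)\to 0$ as $q\to\infty$ in the sector, yielding $\eta_{M,N}(q\,|\,b)\to \exp(-(\mathcal{S}_N L_M)(0\,|\,b))$, which is \eqref{ourasym}. When $M=N$, $(\mathcal{S}_N L_N)(q\,|\,b) = -f(0)\,b_1\cdots b_N \log q + O(1)$, whence $\eta_{N,N}(q\,|\,b) = \exp(-f(0)\,b_1\cdots b_N \log q + O(1))$, which is \eqref{ourasymN}. The main obstacle is precisely the log piece: one must show that finite differences of $p(w)\log w$ with $p$ a polynomial of degree $<N$ decay as $q\to\infty$ in the sector $|\arg q|<\pi$, and this is exactly what the integral-of-$N$-th-derivative representation reduces to an elementary estimate on $g^{(N)}(w) = O(w^{-(N-j)})$.
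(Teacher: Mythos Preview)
Your proof is correct and follows the same overall skeleton as the paper: substitute the Ruijsenaars expansion \eqref{asym} into $(\mathcal{S}_N L_M)(q\,|\,b)$, then handle separately the polynomial piece, the remainder piece, and the log piece $-\tfrac{1}{M!}\mathcal{S}_N\bigl[B_M^{(f)}(w)\log w\bigr]$. The polynomial and remainder pieces are treated identically (via \eqref{auxID}--\eqref{auxIDN} in the paper, which are exactly your observation that $\mathcal{S}_N=\prod_j(I-E_{b_j})$ annihilates polynomials of degree $<N$).

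The genuine difference is in the log piece. The paper expands $\log(q+b_0+\sum b_{k_j})=\log q+\text{(power series in }1/q)$ and identifies the resulting sums as instances of \eqref{line2} with $n=M$; it then invokes the equivalent form \eqref{line1} in Lemma~\ref{mylemma} to see which terms survive. Your route bypasses this Bernoulli-polynomial bookkeeping entirely: you use the iterated integral representation of the $N$-fold finite difference to reduce $\mathcal{S}_N[w^j\log w]$ to a bounded-domain integral of $\tfrac{d^N}{dw^N}[w^j\log w]=O(w^{j-N})$, which is immediately $O(q^{j-N})$ in closed subsectors. This is more self-contained and arguably cleaner, since it does not rely on the structural Lemma~\ref{mylemma}; the paper's approach, on the other hand, reuses machinery already in place and makes the role of the Bernoulli polynomials explicit. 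Both arrive at the same estimates with the same error terms.
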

\begin{corollary}[Positivity]\label{Inequality} If $M<N,$
\begin{equation}
(\mathcal{S}_N L_M)(0\,|\,b) > 0.
\end{equation}
\end{corollary}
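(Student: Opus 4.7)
The plan is to combine the asymptotic formula \eqref{ourasym} with the probabilistic interpretation of $\eta_{M,N}$ as a Mellin transform of a distribution supported in $(0,1]$, exactly where the compound Poisson structure (Corollary \ref{Structure}) gives us the needed strict inequality.

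First, I would restrict $q$ to the positive real axis. Then $\eta_{M,N}(q\,|\,b) = \mathbf{E}[\beta_{M,N}(b)^{q}]$ by Theorem \ref{main}. Because $\beta_{M,N}(b)$ takes values in $(0,1]$, the integrand $\beta_{M,N}(b)^{q}$ is dominated by $1$ and converges pointwise, as $q\to\infty$, to the indicator $\mathbf{1}_{\{\beta_{M,N}(b)=1\}}$. Dominated convergence therefore yields
\begin{equation}\nonumber
\lim_{q\to\infty}\eta_{M,N}(q\,|\,b) \;=\; \mathbf{P}\bigl[\beta_{M,N}(b)=1\bigr].
\end{equation}
Comparing this with the asymptotic identity \eqref{ourasym}, which is valid since $M<N$, gives the exact identification
\begin{equation}\nonumber
(\mathcal{S}_N L_M)(0\,|\,b) \;=\; -\log \mathbf{P}\bigl[\beta_{M,N}(b)=1\bigr].
\end{equation}

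To conclude $(\mathcal{S}_N L_M)(0\,|\,b) > 0$, it then suffices to show $\mathbf{P}[\beta_{M,N}(b)=1] < 1$. By Corollary \ref{Structure},
\begin{equation}\nonumber
-\log\mathbf{P}\bigl[\beta_{M,N}(b)=1\bigr] \;=\; \int_0^\infty e^{-b_0 t}\prod_{j=1}^N (1-e^{-b_j t})\,\frac{f(t)}{t^{M+1}}\,dt,
\end{equation}
so I only need strict positivity and finiteness of this integral. The integrand is strictly positive on $(0,\infty)$ because $f(t)>0$ by assumption and each factor $(1-e^{-b_j t})$ is positive. For integrability, near $t=0$ one has $\prod_{j=1}^N(1-e^{-b_j t})\sim (b_1\cdots b_N)\,t^{N}$ and $f(t)$ is bounded (analytic at $0$), so the integrand behaves like $t^{\,N-M-1}$, which is integrable precisely because $N>M$; near $t=\infty$, the factor $e^{-b_0 t}$ dominates the at-worst-polynomial growth of $f(t)$. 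Hence the integral lies in $(0,\infty)$, forcing $\mathbf{P}[\beta_{M,N}(b)=1]\in(0,1)$ and therefore $(\mathcal{S}_N L_M)(0\,|\,b)>0$.

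There is no real obstacle here; the only subtle point is justifying the exchange of limit and expectation for $q\to\infty$, which is straightforward domination by the constant $1$. The whole argument is essentially a reinterpretation of \eqref{ourasym} via the probabilistic content of Theorem \ref{main} and Corollary \ref{Structure}, with the strict positivity of the Lévy density providing the strictness of the inequality.
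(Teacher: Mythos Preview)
Your proof is correct and follows essentially the same route as the paper's: both compare the asymptotic \eqref{ourasym} with the limit $q\to+\infty$ of the Mellin transform to identify $(\mathcal{S}_N L_M)(0\,|\,b)$ with the (strictly positive) total mass of the L\'evy measure in Corollary~\ref{Structure}. The only cosmetic difference is that the paper passes to the limit directly in the L\'evy--Khinchine representation \eqref{Keq} (and then invokes \eqref{lambda}), whereas you pass to the limit in ${\bf E}[\beta_{M,N}(b)^q]$ via dominated convergence; these are the same computation viewed from the two sides of Theorem~\ref{main}.
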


From now on we restrict our attention to Barnes multiple gamma
functions, \emph{i.e.} $f(t)$ is as in \eqref{fdef}, and write 
$\eta_{M, N}(q\,|\,a,\,b)$ to indicate dependence on $(a_1,\cdots,
a_M)$ and $(b_0,\cdots, b_N).$
Also, $\hat{c}_i \triangleq (\cdots, c_{i-1},\,c_{i+1},\cdots)$ and
$\eta_{M, N}(q\,|\,a,\,b_j+x) \triangleq \eta_{M, N}(q\,|\,a,\cdots
b_{j-1}, b_j+x, b_{j+1}, \cdots),$ $x>0.$ Note that $\eta_{M,
N}(q\,|\,a,\,b)$ is symmetric in $(a_1,\cdots, a_M)$ and
$(b_1,\cdots, b_N).$ 
\begin{theorem}[Functional Equation]\label{FunctEquat}
$1\leq M\leq N,$ $q\in\mathbb{C}-(-\infty, -b_0],$ $i=1\cdots M,$
\begin{equation}
\eta_{M, N}(q+a_i\,|\,a,\,b) =
\eta_{M, N}(q\,|\,a,\,b)\,\exp\bigl(-(\mathcal{S}_N
L_{M-1})(q\,|\,\hat{a}_i, b)\bigr). \label{fe1}
\end{equation}
\end{theorem}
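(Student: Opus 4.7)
The plan is to lift the fundamental functional equation \eqref{feq} for $\Gamma_M(w\,|\,a)$ to the level of $\eta_{M,N}$ via the linear operator $\mathcal{S}_N$. Taking logarithms in \eqref{feq} gives, for each $i=1\cdots M$ and $\Re(w)>0$,
\begin{equation}\nonumber
L_M(w+a_i\,|\,a) = L_M(w\,|\,a) - L_{M-1}(w\,|\,\hat{a}_i),
\end{equation}
which then extends by analyticity to all admissible $w$.

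Next I would observe that the operator $\mathcal{S}_N$ is linear in its functional argument and acts only by the shifts $w\mapsto w+b_0+b_{k_1}+\cdots+b_{k_p}$; in particular it commutes with the rigid shift $w\mapsto w+a_i$. Applying $\mathcal{S}_N$ with parameter string $b$ to both sides of the displayed identity (with $w$ replaced throughout by the shifted argument) yields
\begin{equation}\nonumber
(\mathcal{S}_N L_M)(q+a_i\,|\,b) = (\mathcal{S}_N L_M)(q\,|\,b) - (\mathcal{S}_N L_{M-1})(q\,|\,\hat{a}_i,\,b),
\end{equation}
where on the right-hand side $L_{M-1}$ is now viewed as a function of its first argument with parameter $\hat{a}_i$, justifying the notation in the target statement.

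To finish, I would substitute this identity into Definition \ref{bdef}. The ``ground-state'' subtraction $(\mathcal{S}_N L_M)(0\,|\,b)$ in the definition of $\eta_{M,N}$ is common to both $\eta_{M,N}(q+a_i\,|\,a,\,b)$ and $\eta_{M,N}(q\,|\,a,\,b)$, so it cancels in the ratio, leaving precisely
\begin{equation}\nonumber
\frac{\eta_{M,N}(q+a_i\,|\,a,\,b)}{\eta_{M,N}(q\,|\,a,\,b)} = \exp\bigl(-(\mathcal{S}_N L_{M-1})(q\,|\,\hat{a}_i,\,b)\bigr),
\end{equation}
which is \eqref{fe1}. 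There is no real obstacle here: the content of the proof is entirely the linearity and shift-covariance of $\mathcal{S}_N$ together with the Barnes functional equation \eqref{feq}. The only point worth verifying carefully is that $L_{M-1}(\,\cdot\,|\,\hat{a}_i)$ lies in the class on which $\mathcal{S}_N$ and the holomorphy statement make sense, which is immediate from \eqref{Ldef} since $\Re(q+b_0+b_{k_1}+\cdots+b_{k_p})>0$ whenever $q\in\mathbb{C}-(-\infty,-b_0]$ and the $b_j$ are positive.
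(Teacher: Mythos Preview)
Your proof is correct and follows exactly the same approach as the paper: rewrite the Barnes functional equation \eqref{feq} additively as $L_M(w+a_i\,|\,a)=L_M(w\,|\,a)-L_{M-1}(w\,|\,\hat{a}_i)$, apply the linear operator $\mathcal{S}_N$, and substitute into Definition~\ref{bdef}. One small imprecision: in your final paragraph the condition ``$\Re(q+b_0+b_{k_1}+\cdots+b_{k_p})>0$'' is not implied by $q\in\mathbb{C}-(-\infty,-b_0]$ (take $q$ with large negative real part and nonzero imaginary part); what you actually need, and what does hold, is that $q+b_0+b_{k_1}+\cdots+b_{k_p}\in\mathbb{C}-(-\infty,0]$, which suffices since $L_M$ and $L_{M-1}$ extend holomorphically there.
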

\begin{corollary}[Symmetries]\label{FunctSymmetry}
$1\leq M\leq N,$ $q\in\mathbb{C}-(-\infty, -b_0],$ $i=1\cdots M,$
$j=1\cdots N,$
\begin{align}
\eta_{M, N}(q\,|\,a,\,b_0+x)\,\eta_{M, N}(x\,|\,a,\,b) & = \eta_{M, N}(q+x\,|\,a,\,b), \label{fe2}\\
\eta_{M, N}(q\,|\,a,\,b)\,\eta_{M, N-1}(q\,|\,a,\,b_0+b_j,
\hat{b}_j) & =
\eta_{M, N-1}(q\,|\,a,\,\hat{b}_j), \label{fe3}\\
\eta_{M, N}(q+a_i\,|\,a,\,b)\,\eta_{M-1, N}(q\,|\,\hat{a}_i,\,b) & =
\eta_{M, N}(q\,|\,a,\,b)\,\eta_{M, N}(a_i\,|\,a,\,b), \label{fe1eq}\\
\eta_{M, N}(q\,|\,a,\,b_j+a_i) \,\eta_{M-1,
N-1}(b_j\,|\,\hat{a}_i,\,\hat{b}_j) & = \eta_{M, N}(q\,|\,a,\,b)\,
\eta_{M-1, N-1}(q+b_j\,|\,\hat{a}_i,\,\hat{b}_j), \label{fe4} \\
\eta_{M, N}(q+a_i\,|\,a,\,b) \, \eta_{M-1,
N-1}(q\,|\,\hat{a}_i,\,\hat{b}_j) & = \eta_{M, N}(q\,|\,a,\,b) \,
\eta_{M-1, N-1}(q+b_j\,|\,\hat{a}_i,\,\hat{b}_j).
\label{funceqsymmetry}
\end{align}
\end{corollary}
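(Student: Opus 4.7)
The plan is to reduce all five identities to purely algebraic manipulations at the level of $(\mathcal{S}_N L_M)(\cdot\,|\,a,\,b)$ using three ingredients: the normalization $\eta_{M,N}(0\,|\,a,\,b)=1$ from Definition~\ref{bdef}; the functional equation from Theorem~\ref{FunctEquat}; and the following elementary \emph{subset-splitting identity}, obtained from Definition~\ref{Soperator} by partitioning the alternating sum according to whether a fixed index $j\in\{1,\dots,N\}$ lies in the subset $S$ (and reindexing $S'=S\setminus\{j\}$ for the piece with $j\in S$):
\[
(\mathcal{S}_N g)(q\,|\,a,\,b) \;=\; (\mathcal{S}_{N-1} g)(q\,|\,a,\,\hat{b}_j) \;-\; (\mathcal{S}_{N-1} g)(q\,|\,a,\,b_0+b_j,\,\hat{b}_j).
\]
Since adding $b_j$ to $b_0$ shifts the argument of $L_M$ exactly as adding $b_j$ to $q$, the second summand may equivalently be written as $(\mathcal{S}_{N-1} g)(q+b_j\,|\,a,\,\hat{b}_j)$.

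First I would handle the identities that involve only the operator $\mathcal{S}_N$. For \eqref{fe2}, replacing $b_0$ by $b_0+x$ inside $(\mathcal{S}_N L_M)(q\,|\,a,\,b)$ is the same as replacing $q$ by $q+x$; the normalizing constants in Definition~\ref{bdef} then cancel telescopically on the left-hand side. Identity \eqref{fe3} follows by applying the splitting identity directly to $g=L_M$, subtracting its $q=0$ version, and exponentiating. For \eqref{fe1eq}, I would apply Theorem~\ref{FunctEquat} at both $q$ and at $q=0$: using $\eta_{M,N}(0\,|\,a,\,b)=1$ gives $\eta_{M,N}(a_i\,|\,a,\,b)=\exp\bigl[-(\mathcal{S}_N L_{M-1})(0\,|\,\hat{a}_i,\,b)\bigr]$, and the extra factor $\eta_{M-1,N}(q\,|\,\hat{a}_i,\,b)$ supplies by Definition~\ref{bdef} exactly the missing $\mathcal{S}_N L_{M-1}$ terms so that everything matches.

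Next I would turn to \eqref{fe4}, the only identity requiring the Barnes functional equation in its logarithmic form $L_M(w+a_i\,|\,a) = L_M(w\,|\,a) - L_{M-1}(w\,|\,\hat{a}_i)$ (the log of \eqref{feq}). Inserting this into each summand of $(\mathcal{S}_N L_M)(q\,|\,a,\,b_j+a_i)$, only the subsets $S$ with $j\in S$ generate a correction $-L_{M-1}(q+b_0+\sum_{k\in S} b_k\,|\,\hat{a}_i)$, and after reindexing $S'=S\setminus\{j\}$ these corrections assemble into $+(\mathcal{S}_{N-1} L_{M-1})(q+b_j\,|\,\hat{a}_i,\,\hat{b}_j)$. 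Subtracting the same decomposition at $q=0$ and exponentiating yields
\[
\eta_{M,N}(q\,|\,a,\,b_j+a_i) \;=\; \eta_{M,N}(q\,|\,a,\,b)\,\frac{\eta_{M-1,N-1}(q+b_j\,|\,\hat{a}_i,\,\hat{b}_j)}{\eta_{M-1,N-1}(b_j\,|\,\hat{a}_i,\,\hat{b}_j)},
\]
which is \eqref{fe4} after clearing the denominator. Finally, \eqref{funceqsymmetry} is obtained by combining Theorem~\ref{FunctEquat} with the splitting identity applied to $g=L_{M-1}(\cdot\,|\,\hat{a}_i)$ in its second ($q+b_j$) form: this gives $(\mathcal{S}_N L_{M-1})(q\,|\,\hat{a}_i,\,b) = (\mathcal{S}_{N-1} L_{M-1})(q\,|\,\hat{a}_i,\,\hat{b}_j) - (\mathcal{S}_{N-1} L_{M-1})(q+b_j\,|\,\hat{a}_i,\,\hat{b}_j)$, and inserting this into the ratio $\eta_{M,N}(q+a_i\,|\,a,\,b)/\eta_{M,N}(q\,|\,a,\,b)$ supplied by Theorem~\ref{FunctEquat} rearranges directly to \eqref{funceqsymmetry}.

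The entire argument is elementary bookkeeping with no analytic content beyond what is already in Theorem~\ref{FunctEquat}; the main (minor) obstacle is keeping the signs and index shifts straight when splitting $\mathcal{S}_N$ sums over subsets containing or missing a fixed index $j$, and when simultaneously invoking the Barnes functional equation for each affected summand.
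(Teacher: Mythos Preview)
Your argument is correct. For \eqref{fe2}, \eqref{fe3}, and \eqref{fe1eq} you do exactly what the paper does: the paper also derives \eqref{fe2} from the shift identity $(\mathcal{S}_N f)(q\,|\,b_0+x)=(\mathcal{S}_N f)(q+x\,|\,b)$, calls \eqref{fe3} immediate from Definition~\ref{bdef} (which amounts to your subset-splitting identity), and obtains \eqref{fe1eq} by specializing Theorem~\ref{FunctEquat} to $q=0$ to produce the auxiliary identity $\eta_{M,N}(a_i\,|\,a,b)=\exp\bigl(-(\mathcal{S}_N L_{M-1})(0\,|\,\hat{a}_i,b)\bigr)$.

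Where your route differs slightly is in \eqref{fe4} and \eqref{funceqsymmetry}. The paper derives these by chaining the already-established $\eta$-identities: \eqref{fe4} is obtained from \eqref{fe3}, then \eqref{fe2}, then \eqref{fe1eq}, and \eqref{funceqsymmetry} is obtained from \eqref{fe3} and \eqref{fe1eq}, then \eqref{fe2}, followed by the observation that \eqref{fe3} and the auxiliary identity together give $\eta_{M,N}(a_i\,|\,a,b)=\eta_{M-1,N-1}(b_j\,|\,\hat{a}_i,\hat{b}_j)$. You instead go directly to the $\mathcal{S}_N L_M$ level: for \eqref{fe4} you apply the logarithmic Barnes functional equation termwise and reassemble the $j\in S$ corrections via the splitting identity, and for \eqref{funceqsymmetry} you feed the splitting identity for $L_{M-1}$ straight into the exponent produced by Theorem~\ref{FunctEquat}. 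Both approaches are the same bookkeeping in slightly different packaging; the paper's version makes explicit that \eqref{fe4} and \eqref{funceqsymmetry} are formal consequences of the first three identities, while yours is arguably cleaner as a stand-alone derivation of each.
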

\begin{corollary}[Moments]\label{moments}
Given $k\in\mathbb{N},$ the positive moments of $\beta_{M,
N}(a,\,b)^{a_i}$ satisfy
\begin{align}
{\bf E}\bigl[\beta_{M, N}(a, b)^{k a_i}\bigr] &  =
\exp\Bigl(-\sum\limits_{l=0}^{k-1} \bigl(\mathcal{S}_N
L_{M-1}\bigr)(l a_i\,|\,\hat{a}_i, b)\Bigr), \nonumber \\
& = \prod\limits_{l=0}^{k-1} \Bigl[\frac{\prod\limits_{j_1=1}^N
\Gamma_{M-1}(l a_i+b_0+b_{j_1}\,|\,\hat{a}_i)}{\Gamma_{M-1}(l
a_i+b_0\,|\,\hat{a}_i)} \star \nonumber \\
& \star \frac{\prod\limits_{j_1<j_2<j_3}^N \Gamma_{M-1}(l
a_i+b_0+b_{j_1}+b_{j_2}+b_{j_3}\,|\,\hat{a}_i)}{\prod\limits_{j_1<j_2}^N
\Gamma_{M-1}(l a_i+b_0+b_{j_1}+b_{j_2}\,|\,\hat{a}_i)}\cdots\Bigr].
\label{posmom}
\end{align}
Given $k\in\mathbb{N}$ such that $k a_i<b_0,$ the negative moments
of $\beta_{M, N}(a, b)^{a_i}$ satisfy
\begin{align}
{\bf E}\bigl[\beta_{M, N}(a, b)^{-k a_i}\bigr] & =
\exp\Bigl(\sum\limits_{l=0}^{k-1} \bigl(\mathcal{S}_N
L_{M-1}\bigr)(-(l+1) a_i\,|\,\hat{a}_i, b)\Bigr), \nonumber \\
& = \prod\limits_{l=0}^{k-1} \Bigl[\frac{\Gamma_{M-1}(-(l+1)
a_i+b_0\,|\,\hat{a}_i)}{\prod\limits_{j_1=1}^N \Gamma_{M-1}(-(l+1)
a_i+b_0+b_{j_1}\,|\,\hat{a}_i)} \star \nonumber
\\ & \star\frac{\prod\limits_{j_1<j_2}^N \Gamma_{M-1}(-(l+1)
a_i+b_0+b_{j_1}+b_{j_2}\,|\,\hat{a}_i)}
{\prod\limits_{j_1<j_2<j_3}^N \Gamma_{M-1}(-(l+1)
a_i+b_0+b_{j_1}+b_{j_2}+b_{j_3}\,|\,\hat{a}_i)}\cdots\Bigr].
\label{negmom}
\end{align}
\end{corollary}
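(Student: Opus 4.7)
The plan is to derive both moment formulas by iterating the functional equation in Theorem~\ref{FunctEquat} and then unpacking the action of $\mathcal{S}_N$ combinatorially.

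First I would observe that by Theorem~\ref{main} the Mellin identity $\mathbf{E}[\beta_{M,N}(a,b)^{q}] = \eta_{M,N}(q\,|\,a,b)$ holds throughout $\Re(q) > -b_0$, so the positive moment $\mathbf{E}[\beta_{M,N}(a,b)^{ka_i}]$ equals $\eta_{M,N}(ka_i\,|\,a,b)$, and under the hypothesis $ka_i < b_0$ the negative moment equals $\eta_{M,N}(-ka_i\,|\,a,b)$. Definition~\ref{bdef} immediately gives $\eta_{M,N}(0\,|\,a,b)=1$, which will serve as the base of both recursions.

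Next I would apply Theorem~\ref{FunctEquat} iteratively. Writing $\eta_{M,N}(q+a_i\,|\,a,b) = \eta_{M,N}(q\,|\,a,b)\exp\bigl(-(\mathcal{S}_N L_{M-1})(q\,|\,\hat{a}_i,b)\bigr)$ and telescoping from $q=0$ up to $q=(k-1)a_i$ yields the first line of \eqref{posmom}. For \eqref{negmom} I would invert the functional equation to $\eta_{M,N}(q\,|\,a,b) = \eta_{M,N}(q+a_i\,|\,a,b)\exp\bigl((\mathcal{S}_N L_{M-1})(q\,|\,\hat{a}_i,b)\bigr)$ and telescope from $q=0$ downward through $q=-a_i,-2a_i,\dots,-ka_i$; the hypothesis $ka_i < b_0$ guarantees each intermediate argument lies in $\mathbb{C}\setminus(-\infty,-b_0]$, so that the functional equation applies at every step.

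Finally, to produce the explicit product representations in the second lines of \eqref{posmom} and \eqref{negmom}, I would substitute Definition~\ref{Soperator} to expand
\[
(\mathcal{S}_N L_{M-1})(q\,|\,\hat{a}_i,b) = \sum_{p=0}^N (-1)^p \sum_{k_1<\cdots<k_p=1}^N L_{M-1}\bigl(q+b_0+b_{k_1}+\cdots+b_{k_p}\,|\,\hat{a}_i\bigr),
\]
exponentiate using $\Gamma_{M-1}=\exp(L_{M-1})$, and sort terms with $(-1)^{p+1} = +1$ into the numerator and $(-1)^{p+1}=-1$ into the denominator (with the signs reversed for the negative moment). The $p=0,1,2,3,\dots$ layers then reproduce exactly the alternating nested product of $\Gamma_{M-1}$-factors displayed in the statement. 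Taking the product over $l=0,\dots,k-1$ completes both identities.

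There is no essential obstacle; the argument is a direct telescoping in the functional equation followed by bookkeeping. The only thing requiring a small amount of care is verifying that the poles of $\eta_{M,N}$ are avoided during the backward iteration for negative moments, which is secured by the hypothesis $ka_i < b_0$ stated in the corollary.
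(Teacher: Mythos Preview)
Your proposal is correct and follows essentially the same approach as the paper: iterate the functional equation \eqref{fe1} of Theorem~\ref{FunctEquat} and specialize. The paper records the telescoped identity once in the slightly more general form
\[
\eta_{M,N}(q+k a_i\,|\,a, b) = \eta_{M,N}(q\,|\,a, b)\,\exp\Bigl(-\sum_{l=0}^{k-1} (\mathcal{S}_N L_{M-1})(q+la_i\,|\,\hat{a}_i, b)\Bigr)
\]
and then reads off \eqref{posmom} and \eqref{negmom} by setting $q=0$ and $q=-ka_i$, whereas you perform two separate forward/backward telescopes; the content is identical, and your added remarks on the domain check $ka_i<b_0$ and on unpacking $\mathcal{S}_N$ into the displayed $\Gamma_{M-1}$-products are welcome details the paper leaves implicit.
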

\begin{corollary}[Laplace Transform]\label{laplace}
The power series
\begin{equation}\label{powerseries}
\mathcal{L}^{(i)}_{M,N}(x\,|\,a, b) \triangleq
\sum\limits_{k=0}^\infty \frac{(-x)^k}{k!}
\exp\Bigl(-\sum\limits_{l=0}^{k-1} \bigl(\mathcal{S}_N
L_{M-1}\bigr)(l a_i\,|\,\hat{a}_i, b)\Bigr)
\end{equation}
has infinite radius of convergence and gives the Laplace transform
of $\beta_{M, N}(a, b)^{a_i}.$
\begin{equation}
{\bf E}\Bigl[\exp\bigl(-x \beta_{M, N}(a, b)^{a_i}\bigr)\Bigr] =
\mathcal{L}^{(i)}_{M,N}(x\,|\,a, b),\; x>0.
\end{equation}
In particular, for $x>0,$
\begin{equation}
\mathcal{L}^{(i)}_{M,N}(x\,|\,a, b)>0.
\end{equation}
\end{corollary}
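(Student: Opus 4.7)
The plan is to recognize the coefficients of the series as the positive integer moments of $\beta_{M,N}(a,b)^{a_i}$ already computed in Corollary~\ref{moments}, and then establish the Laplace-transform identity by a one-line dominated convergence argument enabled by Theorem~\ref{main}. Concretely, Corollary~\ref{moments} gives
\begin{equation}\nonumber
{\bf E}\bigl[\beta_{M,N}(a,b)^{k a_i}\bigr] = \exp\Bigl(-\sum_{l=0}^{k-1}(\mathcal{S}_N L_{M-1})(l a_i\,|\,\hat{a}_i,b)\Bigr),
\end{equation}
so $\mathcal{L}^{(i)}_{M,N}(x\,|\,a,b)$ is literally the formal series $\sum_{k\geq 0}\frac{(-x)^k}{k!}{\bf E}[\beta_{M,N}(a,b)^{k a_i}]$.

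Next I would use that Theorem~\ref{main} places $\beta_{M,N}(a,b)$ on $(0,1]$, so that $0<\beta_{M,N}(a,b)^{a_i}\leq 1$ almost surely and hence ${\bf E}[\beta_{M,N}(a,b)^{k a_i}]\leq 1$ for all $k\in\mathbb{N}$. This uniform bound on the coefficients immediately yields $\sum_{k\geq 0}|x|^k/k!\cdot{\bf E}[\beta_{M,N}(a,b)^{k a_i}]\leq e^{|x|}$ for every $x\in\mathbb{C}$, which is the claimed infinite radius of convergence. To identify the sum with the Laplace transform, for $x>0$ I would expand $e^{-x t}=\sum_{k\geq 0}(-xt)^k/k!$ at $t=\beta_{M,N}(a,b)^{a_i}$; the partial sums are dominated pointwise by $e^{x\beta_{M,N}(a,b)^{a_i}}\leq e^x$, which is integrable, so by dominated convergence one may interchange sum and expectation to get
\begin{equation}\nonumber
{\bf E}\Bigl[\exp\bigl(-x\beta_{M,N}(a,b)^{a_i}\bigr)\Bigr] = \sum_{k=0}^\infty \frac{(-x)^k}{k!}{\bf E}\bigl[\beta_{M,N}(a,b)^{k a_i}\bigr] = \mathcal{L}^{(i)}_{M,N}(x\,|\,a,b).
\end{equation}
Positivity for $x>0$ is then immediate since $\exp(-x\beta_{M,N}(a,b)^{a_i})>0$ pointwise and takes values in $(0,1)$.

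There is no real technical obstacle here: the corollary is a bookkeeping consequence of Theorem~\ref{main} (support on $(0,1]$) and Corollary~\ref{moments} (explicit moments). The only feature worth emphasizing is that the \emph{non-trivial} content of the statement is not the Laplace inversion but the resulting positivity of an explicit alternating series whose coefficients are products and ratios of multiple gamma functions; absent the probabilistic interpretation, this positivity is not visible term by term.
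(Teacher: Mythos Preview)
Your argument is correct and in fact cleaner than the paper's. The paper identifies the coefficients with the moments via \eqref{posmom} just as you do, but then invokes Theorem~\ref{Asymptotics} to control the growth of $\eta_{M,N}(k a_i\,|\,a,b)$ as $k\to\infty$ and cites Feller, Section~7.6, for the general fact that the moment series then sums to the Laplace transform. You bypass the asymptotic estimate entirely by observing that $\beta_{M,N}(a,b)\in(0,1]$ forces all moments to lie in $(0,1]$, which gives the infinite radius of convergence at once, and you replace the appeal to Feller by an explicit dominated-convergence interchange. Your route is more self-contained and uses only Theorem~\ref{main} and Corollary~\ref{moments}; the paper's route via Theorem~\ref{Asymptotics} is not wrong but is unnecessarily heavy here, the asymptotics being genuinely needed only for Corollary~\ref{Rama}.
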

\begin{corollary}[Ramanujan Representation]\label{Rama}
The Laplace transform satisfies
\begin{equation}
\int\limits_0^\infty x^{q-1}\mathcal{L}^{(i)}_{M,N}(x\,|\,a, b) \,dx
= \Gamma(q)\, \eta_{M,N}(-q a_i\,|\,a, b),\; 0<\Re(q)<b_0/a_i.
\end{equation}
\end{corollary}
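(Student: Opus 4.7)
The identity is an instance of what is essentially Ramanujan's master theorem, but here there is a direct probabilistic route that bypasses any delicate interpolation argument: one simply swaps the order of integration against the Laplace representation supplied by Corollary \ref{laplace}.

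The plan is as follows. By Corollary \ref{laplace},
\begin{equation*}
\mathcal{L}^{(i)}_{M,N}(x\,|\,a,b) = {\bf E}\bigl[\exp(-x\,\beta_{M,N}(a,b)^{a_i})\bigr], \quad x>0.
\end{equation*}
Multiplying by $x^{q-1}$ and integrating, I would formally exchange expectation and integral to obtain
\begin{equation*}
\int_0^\infty x^{q-1}\mathcal{L}^{(i)}_{M,N}(x\,|\,a,b)\,dx = {\bf E}\Bigl[\int_0^\infty x^{q-1} e^{-x\beta_{M,N}(a,b)^{a_i}}\,dx\Bigr] = \Gamma(q)\,{\bf E}\bigl[\beta_{M,N}(a,b)^{-q a_i}\bigr],
\end{equation*}
after the substitution $u = x\,\beta_{M,N}(a,b)^{a_i}$ inside the expectation. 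The right-hand side equals $\Gamma(q)\,\eta_{M,N}(-q a_i\,|\,a,b)$ by Theorem \ref{main}, since $\Re(-q a_i) = -a_i\Re(q) > -b_0$ is exactly the domain condition $\Re(q) < b_0/a_i$.

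The only real work is justifying the interchange of expectation and integral, which is the main obstacle. For this I would first argue on the real line $0<q<b_0/a_i$, where the integrand $x^{q-1} e^{-x\beta_{M,N}(a,b)^{a_i}}$ is nonnegative, so Tonelli applies and yields
\begin{equation*}
\int_0^\infty x^{q-1}\mathcal{L}^{(i)}_{M,N}(x\,|\,a,b)\,dx = \Gamma(q)\,{\bf E}\bigl[\beta_{M,N}(a,b)^{-q a_i}\bigr],
\end{equation*}
finiteness of the right-hand side being guaranteed because $\eta_{M,N}(-q a_i\,|\,a,b)$ is holomorphic at real $q\in(0,b_0/a_i)$ by Theorem \ref{main}. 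Once this real identity is established, both sides are holomorphic in $q$ on the strip $0<\Re(q)<b_0/a_i$: for the right-hand side this is immediate from Theorem \ref{main}, and for the left-hand side it follows from the bound $|x^{q-1}\mathcal{L}^{(i)}_{M,N}(x\,|\,a,b)| \leq x^{\Re(q)-1}\mathcal{L}^{(i)}_{M,N}(x\,|\,a,b)$ together with the already-proved real case, which provides a dominating integrable majorant and allows differentiation in $q$ under the integral. Analytic continuation then extends the identity to the full strip, completing the proof.
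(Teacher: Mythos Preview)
Your argument is correct and takes a genuinely different route from the paper. The paper's proof is a one-line appeal to Hardy's rigorous form of Ramanujan's Master Theorem: it observes that $\eta_{M,N}(q\,|\,a,b)$ is analytic for $\Re(q)>-b_0$ and invokes Theorem~\ref{Asymptotics} to verify Hardy's growth bound, which is the hypothesis needed for the Master Theorem to apply. Your approach instead exploits the probabilistic content of Corollary~\ref{laplace} directly: since $\mathcal{L}^{(i)}_{M,N}$ is an honest Laplace transform of the bounded variable $\beta_{M,N}^{a_i}\in(0,1]$, the Mellin integral can be evaluated by Tonelli and the elementary identity $\int_0^\infty x^{q-1}e^{-cx}\,dx=\Gamma(q)c^{-q}$ for $c>0$, then extended to the strip by analytic continuation. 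This bypasses both the external Master Theorem and the asymptotic estimate of Theorem~\ref{Asymptotics}; in effect you are giving the standard probabilistic proof of Ramanujan's theorem in the special case where the interpolating function is a Mellin transform of a distribution on $(0,1]$. The paper's route is shorter to state but leans on heavier machinery; yours is self-contained within the results already established in Section~2.
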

\begin{theorem}[Shintani Factorization]\label{Factorization}
Given $1\leq M\leq N$ and $q\in\mathbb{C}-(-\infty, -b_0],$
\begin{align}
\eta_{M,N}(q\,|\,a, b) & = \prod\limits_{k=0}^\infty
\eta_{M-1,N}(q\,|\,\hat{a}_i, b_0+ka_i), \label{infinprod1}\\
& = \prod\limits_{k=0}^\infty \frac{\eta_{M-1,N}(q+k
a_i\,|\,\hat{a}_i, b)}{\eta_{M-1,N}(k
a_i\,|\,\hat{a}_i, b)}, \label{infinprod2} \\
& = \prod\limits_{k=0}^\infty \frac{\eta_{M-1,N-1}(q+k
a_i\,|\,\hat{a}_i, \hat{b}_j)}{\eta_{M-1,N-1}(k a_i\,|\,\hat{a}_i,
\hat{b}_j)} \, \frac{\eta_{M-1,N-1}(k a_i+b_j\,|\,\hat{a}_i,
\hat{b}_j)}{\eta_{M-1,N-1}(q+k a_i+b_j\,|\,\hat{a}_i, \hat{b}_j)}.
\label{infinprod3}
\end{align}
\end{theorem}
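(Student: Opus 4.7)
The plan is to establish \eqref{infinprod1} first from the functional equation \eqref{fe1} of Theorem \ref{FunctEquat} combined with the asymptotics of Theorem \ref{Asymptotics}, and then to derive \eqref{infinprod2} and \eqref{infinprod3} as purely algebraic consequences of \eqref{infinprod1}. Iterating \eqref{fe1} $K$ times yields
\begin{equation}
\log\eta_{M,N}(q+Ka_i\,|\,a,b) - \log\eta_{M,N}(q\,|\,a,b) = -\sum_{k=0}^{K-1} (\mathcal{S}_N L_{M-1})(q+ka_i\,|\,\hat{a}_i, b).
\end{equation}
Subtracting the same identity at $q=0$ (where $\eta_{M,N}(0\,|\,a,b)=1$ by construction) and using the trivial shift identity $(\mathcal{S}_N L_{M-1})(q+ka_i\,|\,\hat{a}_i, b) = (\mathcal{S}_N L_{M-1})(q\,|\,\hat{a}_i, b_0+ka_i)$, which is immediate from Definition \ref{Soperator}, each summand becomes $\log\eta_{M-1,N}(q\,|\,\hat{a}_i, b_0+ka_i)$, giving
\begin{equation}
\log\eta_{M,N}(q\,|\,a,b) = \log\frac{\eta_{M,N}(q+Ka_i\,|\,a,b)}{\eta_{M,N}(Ka_i\,|\,a,b)} + \sum_{k=0}^{K-1}\log\eta_{M-1,N}(q\,|\,\hat{a}_i, b_0+ka_i).
\end{equation}

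The crux of the argument is that the boundary ratio tends to $1$ as $K\to\infty$. For $M<N$ this is immediate from \eqref{ourasym}, since numerator and denominator converge to the same limit $\exp(-(\mathcal{S}_N L_M)(0\,|\,a,b))$. For $M=N$, \eqref{ourasymN} supplies the leading $-b_1\cdots b_N f(0)\log(q)$, whose contribution to the ratio becomes a vanishing $\log(1+q/(Ka_i))$, but the $O(1)$ remainder in \eqref{ourasymN} must be refined to a convergent quantity rather than a merely bounded one. Unpacking Ruijsenaars' expansion \eqref{asym}, the polynomial part of $L_N$ has degree $N$ in its argument and its image under $\mathcal{S}_N$ collapses to an explicit constant (since $\mathcal{S}_N$ annihilates polynomials of degree strictly less than $N$ and sends a monic degree-$N$ polynomial to $(-1)^N N!\,b_1\cdots b_N$), while the remainder $R_M(w)=O(w^{-1})$ survives as $O(q^{-1})$ after $\mathcal{S}_N$, and the sub-leading corrections to the logarithmic term are likewise $O(q^{-1})$. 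Hence $\log\eta_{N,N}(q\,|\,b)=-b_1\cdots b_N f(0)\log(q)+C+O(q^{-1})$ for a constant $C$, the boundary ratio is $O(K^{-1})$, and passing to the limit yields \eqref{infinprod1}.

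Equation \eqref{infinprod2} follows from \eqref{infinprod1} by the identity $\eta_{M-1,N}(q\,|\,\hat{a}_i, b_0+ka_i) = \eta_{M-1,N}(q+ka_i\,|\,\hat{a}_i, b)/\eta_{M-1,N}(ka_i\,|\,\hat{a}_i, b)$, which comes from the same shift identity and Definition \ref{bdef}. For \eqref{infinprod3} I apply the operator identity $\mathcal{S}_N=\mathcal{S}_{N-1}-\tau_{b_j}\mathcal{S}_{N-1}$, where $\tau_{b_j}$ shifts the argument by $b_j$ and $\mathcal{S}_{N-1}$ acts on the $(N-1)$-tuple with $b_j$ removed; this is immediate from the factorization $\mathcal{S}_N=\prod_{k=1}^N(1-\tau_{b_k})$ of Definition \ref{Soperator}. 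Substituting this inside each term of \eqref{infinprod1} and regrouping the four resulting $\mathcal{S}_{N-1}L_{M-1}$-combinations as logarithms of $\eta_{M-1,N-1}$-ratios via Definition \ref{bdef} produces exactly \eqref{infinprod3}. The principal technical obstacle is the $M=N$ tail analysis described above, which requires one to track constants carefully through the Ruijsenaars expansion rather than rely on the bare $O(1)$ estimate of Theorem \ref{Asymptotics}.
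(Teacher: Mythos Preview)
Your argument is correct, but it takes a genuinely different route from the paper's. The paper does not iterate the functional equation at all: it proves \eqref{infinprod1} directly from the L\'evy--Khinchine representation \eqref{LKH}. Writing the partial product $\prod_{k=0}^{L}\eta_{M-1,N}(q\,|\,\hat a_i,b_0+ka_i)$ via \eqref{Keq}--\eqref{Kdef} with $M-1$ in place of $M$, the shifts $b_0\mapsto b_0+ka_i$ contribute a geometric factor $\sum_{k=0}^{L}e^{-ka_it}=(1-e^{-a_it(L+1)})/(1-e^{-a_it})$ inside the integral, and letting $L\to\infty$ by dominated convergence reproduces exactly the integrand for $\eta_{M,N}$. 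This handles $M<N$ and $M=N$ uniformly in two lines, with no boundary term and no need to sharpen the $O(1)$ in \eqref{ourasymN} to $C+O(q^{-1})$.

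Your approach has the virtue of making the link between the functional equation and the factorization explicit (indeed, the paper only observes this link \emph{a posteriori} in Corollary~\ref{solution}). The cost is the $M=N$ tail analysis you correctly identify as the crux: you must go back into the proof of Theorem~\ref{Asymptotics} and track that (i) the polynomial part of \eqref{asym} collapses under $\mathcal{S}_N$ to a genuine constant by \eqref{auxID}--\eqref{auxIDN}, (ii) only finitely many terms $r\le N$ of the $\log$-expansion survive after applying \eqref{deriv1}, each contributing a constant plus $O(q^{-1})$, and (iii) the Ruijsenaars remainder $R_N$ contributes $O(q^{-1})$. Your sketch of this is accurate. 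For \eqref{infinprod2} and \eqref{infinprod3} the two proofs coincide: your shift identity is \eqref{fe2} and your $\mathcal{S}_N=(1-\tau_{b_j})\mathcal{S}_{N-1}$ is exactly \eqref{fe3}.
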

\begin{remark}
This factorization is the analogue of the factorization of the
Barnes multiple gamma function $\Gamma_M(w\,|\,a)$ into the product
of ratios of $\Gamma_{M-1}(w\,|\,\hat{a}_i)$ originally due to
\cite{Shintani} for $M=2$ and, in general, due to \cite{KataOhts}.
\end{remark}
\begin{corollary}[Solution to Functional Equations]\label{solution}
The infinite product representation in Theorem \ref{Factorization}
is the solution to the functional equation in Theorem
\ref{FunctEquat}.
\end{corollary}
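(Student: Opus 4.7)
The plan is to take the infinite-product representation \eqref{infinprod2} of Theorem~\ref{Factorization} as given and verify directly, by a telescoping argument in the product index $k$, that it satisfies the functional equation \eqref{fe1} of Theorem~\ref{FunctEquat}.

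The first step is to substitute $q\mapsto q+a_i$ in \eqref{infinprod2}, reindex the numerator via $k\mapsto k+1$, and combine the two resulting products into a single quotient:
\begin{equation*}
\frac{\eta_{M,N}(q+a_i\,|\,a,b)}{\eta_{M,N}(q\,|\,a,b)} = \prod_{k=0}^\infty \frac{\eta_{M-1,N}\bigl(q+(k+1)a_i\,|\,\hat{a}_i,b\bigr)}{\eta_{M-1,N}(q+k a_i\,|\,\hat{a}_i,b)}.
\end{equation*}
This product telescopes, and its partial product through $k=K-1$ equals $\eta_{M-1,N}(q+K a_i\,|\,\hat{a}_i,b)\big/\eta_{M-1,N}(q\,|\,\hat{a}_i,b)$.

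The second step is to pass to the limit $K\to\infty$. Since the hypothesis $M\le N$ implies $M-1<N$, the asymptotic \eqref{ourasym} of Theorem~\ref{Asymptotics} applies and yields
\begin{equation*}
\lim_{K\to\infty} \eta_{M-1,N}(q+K a_i\,|\,\hat{a}_i,b) = \exp\bigl(-(\mathcal{S}_N L_{M-1})(0\,|\,\hat{a}_i,b)\bigr).
\end{equation*}
Expanding $\eta_{M-1,N}(q\,|\,\hat{a}_i,b)$ in the denominator via Definition~\ref{bdef}, the two occurrences of $(\mathcal{S}_N L_{M-1})(0\,|\,\hat{a}_i,b)$ cancel, and the ratio reduces to $\exp\bigl(-(\mathcal{S}_N L_{M-1})(q\,|\,\hat{a}_i,b)\bigr)$, which is precisely the multiplicative factor prescribed by \eqref{fe1}.

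The main (and really only) obstacle is justifying the manipulation that splits the ratio $\eta_{M,N}(q+a_i\,|\,a,b)/\eta_{M,N}(q\,|\,a,b)$ into a single telescoping product and then exchanges limit and product. This is legitimate because Theorem~\ref{Factorization} already asserts convergence of \eqref{infinprod2}, and \eqref{ourasym} forces both $\eta_{M-1,N}(q+k a_i\,|\,\hat{a}_i,b)$ and $\eta_{M-1,N}(k a_i\,|\,\hat{a}_i,b)$ to approach the same nonzero limit as $k\to\infty$, so that partial products may be formed and recombined termwise. An identical argument starting from either \eqref{infinprod1} or \eqref{infinprod3} gives parallel verifications.
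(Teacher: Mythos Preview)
Your proposal is correct and follows essentially the same route as the paper: start from the product \eqref{infinprod2}, telescope after the shift $q\mapsto q+a_i$, and invoke the asymptotic \eqref{ourasym} (applicable since $M-1<N$) together with Definition~\ref{bdef} to identify the surviving factor as $\exp\bigl(-(\mathcal{S}_N L_{M-1})(q\,|\,\hat a_i,b)\bigr)$. The only cosmetic difference is that the paper carries out the telescoping at the level of finite partial products of $\eta_{M,N}(q+a_i\,|\,a,b)$ alone rather than first forming the ratio of two infinite products, which sidesteps the division-of-products justification you supply in your final paragraph.
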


We conclude this section with two results that hold for special
values of $a$ and $b.$
\begin{theorem}[Reduction to Independent Factors]\label{Reduction}
Given $i, j,$ if $b_j=n\,a_i$ for $n\in\mathbb{N},$
\begin{equation}\label{reductequat}
\beta_{M, N}(a, b) \overset{{\rm in \,law}}{=} \prod_{k=0}^{n-1}
\beta_{M-1, N-1}\bigl(\hat{a}_i, b_0+ka_i,\, \hat{b}_j\bigr).
\end{equation}
\end{theorem}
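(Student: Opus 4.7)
The plan is to reduce the equality in law to an identity of Mellin transforms, then collapse the Shintani factorization to a finite product by exploiting the integrality $b_j/a_i = n$, and finally rewrite each surviving ratio via the translation symmetry \eqref{fe2}.

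First, since $\beta_{M,N}(a,b)$ is uniquely determined by its Mellin transform on $\Re(q)>-b_0$ (Theorem \ref{main}), and since the Mellin transform of a product of independent random variables on $(0,1]$ factors, it suffices to interpret the right-hand side of \eqref{reductequat} as a product of independent copies and to establish the functional identity
\begin{equation*}
\eta_{M,N}(q\,|\,a,b) \;=\; \prod_{k=0}^{n-1}\eta_{M-1,N-1}\bigl(q\,\big|\,\hat{a}_i,\,b_0+k a_i,\,\hat{b}_j\bigr)
\end{equation*}
for $q\in\mathbb{C}-(-\infty,-b_0]$.

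Next, I would apply the Shintani factorization \eqref{infinprod3} of Theorem \ref{Factorization} along the indices $i,j$, which writes $\eta_{M,N}(q\,|\,a,b)$ as a convergent infinite product of the terms
\begin{equation*}
\frac{\eta_{M-1,N-1}(q+k a_i\,|\,\hat{a}_i,\hat{b}_j)}{\eta_{M-1,N-1}(k a_i\,|\,\hat{a}_i,\hat{b}_j)}\cdot\frac{\eta_{M-1,N-1}(k a_i+b_j\,|\,\hat{a}_i,\hat{b}_j)}{\eta_{M-1,N-1}(q+k a_i+b_j\,|\,\hat{a}_i,\hat{b}_j)}.
\end{equation*}
Substituting $b_j=n a_i$, the second fraction at level $k$ becomes the reciprocal of the first fraction at level $k+n$, so the infinite product telescopes and only the indices $k=0,1,\ldots,n-1$ survive, leaving
\begin{equation*}
\eta_{M,N}(q\,|\,a,b) \;=\; \prod_{k=0}^{n-1}\frac{\eta_{M-1,N-1}(q+k a_i\,|\,\hat{a}_i,\hat{b}_j)}{\eta_{M-1,N-1}(k a_i\,|\,\hat{a}_i,\hat{b}_j)}.
\end{equation*}

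Finally, the translation symmetry \eqref{fe2} of Corollary \ref{FunctSymmetry}, applied at level $(M-1,N-1)$ with $x=k a_i$, rewrites each factor as $\eta_{M-1,N-1}(q\,|\,\hat{a}_i, b_0+k a_i, \hat{b}_j)$, completing the desired identity. The only real care required, and the step most likely to conceal a subtlety, is verifying that the rearrangement leading to telescoping respects the convergence asserted in Theorem \ref{Factorization}: one should truncate the infinite product at some level $K$, perform the finite telescoping, and pass to the limit $K\to\infty$ using that the truncation error tends to one as a consequence of the convergence of the Shintani product. Once this is in place, the remaining steps are mechanical applications of the previously established functional equations.
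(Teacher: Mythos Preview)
Your argument is correct and complete in spirit, but it takes a genuinely different route from the paper. The paper proceeds by induction on $n$: the base case $b_j=a_i$ is read off directly from the L\'evy--Khinchine representation \eqref{LKH} (the factor $1-e^{-b_jt}$ cancels against $1-e^{-a_it}$ in $f(t)$), and the inductive step applies the symmetry \eqref{fe4} together with \eqref{fe2}. Your approach instead pulls out the Shintani factorization \eqref{infinprod3}, telescopes it down to a finite product using $b_j=na_i$, and then applies \eqref{fe2}. This is arguably more conceptual, handling all $n$ at once without induction, at the cost of invoking the heavier Theorem \ref{Factorization}; the paper's route stays closer to the elementary functional equations.

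One small caveat on the step you already flagged. After truncating at level $K$ and telescoping you obtain
\[
\prod_{k=0}^{K}\frac{R_k}{R_{k+n}}=\frac{R_0\cdots R_{n-1}}{R_{K+1}\cdots R_{K+n}},\qquad R_k\triangleq\frac{\eta_{M-1,N-1}(q+ka_i\,|\,\hat{a}_i,\hat{b}_j)}{\eta_{M-1,N-1}(ka_i\,|\,\hat{a}_i,\hat{b}_j)},
\]
and convergence of the Shintani product tells you only that the left side converges, hence that $R_{K+1}\cdots R_{K+n}$ has a limit; it does not by itself force that limit to be $1$. You need the separate fact that $R_k\to 1$, which follows immediately either from Theorem \ref{Asymptotics} applied to $\eta_{M-1,N-1}$ (both numerator and denominator have the same leading behavior as $k\to\infty$), or from the L\'evy--Khinchine integral \eqref{LKH} for $\eta_{M-1,N-1}(q\,|\,\hat{a}_i,b_0+ka_i,\hat{b}_j)$ together with dominated convergence. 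With that one line added, your proof is complete.
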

\begin{theorem}[Moments]\label{Momentsaspecial} Let $a_i=1$ for all $i=1\cdots M.$ Then, for any
$n\in\mathbb{N},$
\begin{align}
{\bf E}\bigl[\beta_{M, N}(a, b)^n\bigr]  & =
\prod\limits_{i=1}^{M-1}e^{(-1)^i \binom{n}{i} (\mathcal{S}_N
L_{M-i})(0\,|\,b)} \prod\limits_{i_1=0}^{n-1}
\prod\limits_{i_2=0}^{i_1-1} \cdots \prod\limits_{i_M=0}^{i_{M-1}-1}
e^{(-1)^M (\mathcal{S}_N L_{0})(i_M\,|\,b)}, \nonumber \\
& = \prod\limits_{i=1}^{M-1}e^{(-1)^i \binom{n}{i} (\mathcal{S}_N
L_{M-i})(0\,|\,b)} \prod\limits_{i_1=0}^{n-1}
\prod\limits_{i_2=0}^{i_1-1} \cdots \prod\limits_{i_M=0}^{i_{M-1}-1}
\Bigl[\frac{\prod\limits_{j_1=1}^N (i_M+b_0+b_{j_1})}{(i_M+b_0)} \star \nonumber \\
& \star \frac{\prod\limits_{j_1<j_2<j_3}^N
(i_M+b_0+b_{j_1}+b_{j_2}+b_{j_3})}{\prod\limits_{j_1<j_2}^N
(i_M+b_0+b_{j_1}+b_{j_2})}\cdots\Bigr]^{(-1)^M}.
\end{align}
\end{theorem}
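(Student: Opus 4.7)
The plan is to start from the integer-moment formula of Corollary \ref{moments} (specialized to $a_i=1$) and iteratively apply the fundamental $\Gamma$-functional equation \eqref{feq} to reduce the $L_{M-1}$ appearing there down through $L_{M-2}, L_{M-3}, \ldots, L_0$. The hypothesis $a_i = 1$ for all $i$ is decisive: after any single reduction, the remaining tuple $\hat{a}_i$ is still all ones (of length one less), so the same step repeats at the next level without introducing new $a$-dependent bookkeeping.

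Concretely, Corollary \ref{moments} with $k=n$ and $a_1=1$ gives the base identity $\log{\bf E}[\beta_{M,N}(a,b)^n] = -\sum_{l=0}^{n-1} (\mathcal{S}_N L_{M-1})(l\,|\,\hat{a}_1, b)$. Next, I would derive the operational recursion $(\mathcal{S}_N L_k)(q\,|\,b) = (\mathcal{S}_N L_k)(0\,|\,b) - \sum_{l=0}^{q-1} (\mathcal{S}_N L_{k-1})(l\,|\,b)$, valid for integer $q \ge 0$. This follows because \eqref{feq} specializes to $L_k(w+1) - L_k(w) = -L_{k-1}(w)$ in the all-ones setting, which telescopes to $L_k(q+w) - L_k(w) = -\sum_{l=0}^{q-1} L_{k-1}(w+l)$; evaluating at $w = b_0 + b_{k_1} + \cdots + b_{k_p}$ and summing against the alternating signs of Definition \ref{Soperator} yields the claim.

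For the iteration, I introduce the iterated-sum operator $A_j g := \sum_{i_1=0}^{n-1}\sum_{i_2=0}^{i_1-1}\cdots\sum_{i_j=0}^{i_{j-1}-1} g(i_j)$, which satisfies $A_j(1) = \binom{n}{j}$ (counting strictly decreasing $j$-tuples in $\{0,\ldots,n-1\}$) and $A_j\bigl(\sum_{m=0}^{i_j-1} h(m)\bigr) = A_{j+1}\bigl(h(i_{j+1})\bigr)$. Substituting the recursion above gives $A_j\bigl((\mathcal{S}_N L_{M-j})(i_j|b)\bigr) = \binom{n}{j}(\mathcal{S}_N L_{M-j})(0|b) - A_{j+1}\bigl((\mathcal{S}_N L_{M-j-1})(i_{j+1}|b)\bigr)$, and unrolling this for $j=1,\ldots,M-1$ starting from $\log{\bf E} = -A_1\bigl((\mathcal{S}_N L_{M-1})(i_1|b)\bigr)$ produces the first displayed form of the theorem. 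The second form follows using $\Gamma_0(w) = 1/w$, hence $L_0(w) = -\log w$, and expanding $(\mathcal{S}_N L_0)(i_M|b)$ as an alternating sum of logarithms via Definition \ref{Soperator}; exponentiating against the overall sign $(-1)^M$ reproduces the bracketed product of ratios raised to the $(-1)^M$ power. The main obstacle is purely sign-and-index bookkeeping over the $M-1$ iterations --- confirming that each passage from level $j$ to $j+1$ generates exactly $(-1)^j\binom{n}{j}$ in front of $(\mathcal{S}_N L_{M-j})(0|b)$ and that the residual term after $M-1$ iterations carries the sign $(-1)^M$ --- but no new analytic input is needed beyond \eqref{feq} and Corollary \ref{moments}.
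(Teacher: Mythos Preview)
Your proposal is correct and follows essentially the same route as the paper: the recursion $(\mathcal{S}_N L_k)(q\,|\,b) = (\mathcal{S}_N L_k)(0\,|\,b) - \sum_{l=0}^{q-1} (\mathcal{S}_N L_{k-1})(l\,|\,b)$ is exactly the paper's \eqref{momentfunceq}, and your iterated-sum operator $A_j$ with $A_j(1)=\binom{n}{j}$ is the paper's nested-sum identity, unrolled by induction down to $L_0(w)=-\log w$. The only cosmetic difference is that the paper phrases the iteration as an induction on the depth $k$ of the formula for $(\mathcal{S}_N L_M)(n\,|\,b)-(\mathcal{S}_N L_M)(0\,|\,b)$, whereas you package the same induction in the $A_j$ notation.
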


We note that the structure of $\beta_{M,N}(a)$ depends on
rationality of $(a_1,\cdots, a_M)$ and $(b_1,\cdots, b_M).$ This is
clear from Definition \ref{bdef} as this structure is determined by
ratios of multiple gamma functions that have poles specified in
\eqref{poles}. This phenomenon was studied in a different context
for the double gamma function in \cite{HubKuz} and \cite{Kuz}.

\section{Examples}

It is not difficult to compute $\beta_{M,N}(a, b)$ for small $M$ and
$N.$ We give four examples that can be checked by direct inspection.
\begin{example}\label{example} Let $\delta(x-1)$ be shorthand for an atom at $x=1.$
\begin{align}
\beta_{0, 0} & = b_0\,x^{b_0-1}\,dx, \\
\beta_{0, 1} & = \frac{b_1}{b_0+b_1} b_0\,x^{b_0-1}\,dx +
\frac{b_0}{b_0+b_1}\delta(x-1)\,dx, \\
\beta_{1,1} & =
a_1\frac{\Gamma\bigl((b_0+b_1)/a_1\bigr)}{\Gamma\bigl(b_0/a_1\bigr)\Gamma\bigl(b_1/a_1\bigr)}
x^{b_0-1}(1-x^{a_1})^{b_1/a_1-1}\,dx, \label{beta}\\
\beta_{0, 2} & =
\frac{b_0b_1b_2(b_0+b_1+b_2)}{(b_0+b_1)(b_0+b_2)(b_1+b_2)}
x^{b_0-1}(1-x^{b_1+b_2})\,dx + \delta(x-1)\frac{b_0
(b_0+b_1+b_2)}{(b_0+b_1)(b_0+b_2)}\,dx.
\end{align}
\end{example}

In the rest of this section we will focus on 
the special case of $M=N=2$ in order to illustrate the general
theory with a concrete yet quite non-trivial example. In addition,
this case is also of a particular interest in the probabilistic
theory of the Selberg integral that we will review in Section 4. Let
$a_1=1$ and $a_2=\tau>0$ and write $\beta_{2, 2}(\tau, b),$
$\eta_{2,2}(q\,|\,\tau, b),$ and
$\Gamma_2\bigl(w\,|\,(1,\tau)\bigr)=\Gamma_2(w\,|\,\tau)$ for
brevity. From Definition \ref{bdef} and Theorem \ref{main} we have
${\bf E}\bigl[\beta_{2, 2}(\tau, b)^q\bigr] = \eta_{2,2}(q\,|\,\tau,
b)$ for $\Re(q)>-b_0$ and
\begin{equation}
\eta_{2,2}(q\,|\,\tau, b) =
\frac{\Gamma_2(q+b_0\,|\,\tau)}{\Gamma_2(b_0\,|\,\tau)}
\frac{\Gamma_2(b_0+b_1\,|\,\tau)}{\Gamma_2(q+b_0+b_1\,|\,\tau)}
\frac{\Gamma_2(b_0+b_2\,|\,\tau)}{\Gamma_2(q+b_0+b_2\,|\,\tau)}
\frac{\Gamma_2(q+b_0+b_1+b_2\,|\,\tau)}{\Gamma_2(b_0+b_1+b_2\,|\,\tau)}.
\end{equation}
The asymptotic behavior of $\eta_{2,2}(q\,|\,\tau, b)$ 
follows from Theorem \ref{Asymptotics}.
\begin{equation}
\eta_{2, 2}(q\,|\,\tau, b) = \exp\Bigl(-\frac{b_1 b_2}{\tau}\log(q)
+ O(1)\Bigr), \; q\rightarrow\infty,\;|\arg(q)|<\pi.
\end{equation}
Using \eqref{gamma1}, the functional equation in Theorem
\ref{FunctEquat} takes the form
\begin{align}
\eta_{2, 2}(q+1\,|\,\tau, b) & = \eta_{2, 2}(q\,|\,\tau, b)\,
\frac{\Gamma\bigl((q+b_0+b_1)/\tau\bigr)\Gamma\bigl((q+b_0+b_2)/\tau\bigr)}
{\Gamma\bigl((q+b_0)/\tau\bigr)\Gamma\bigl((q+b_0+b_1+b_2)/\tau\bigr)},
\\
\eta_{2, 2}(q+\tau\,|\,\tau, b) & = \eta_{2, 2}(q\,|\,\tau, b)\,
\frac{\Gamma(q+b_0+b_1)\Gamma(q+b_0+b_2)}
{\Gamma(q+b_0)\Gamma\bigl(q+b_0+b_1+b_2)}.
\end{align}
The positive moments in Corollary \ref{moments} for $k\in\mathbb{N}$
are
\begin{align}
{\bf E}\bigl[\beta_{2, 2}(\tau, b)^k\bigr] & =
\prod\limits_{l=0}^{k-1}
\Bigl[\frac{\Gamma\bigl((l+b_0+b_1)/\tau\bigr)\,\Gamma\bigl((l+b_0+b_2)/\tau\bigr)}{\Gamma\bigl((l
+b_0)/\tau\bigr)\,\Gamma\bigl((l+b_0+b_1+b_2)/\tau\bigr)}\Bigr], \\
{\bf E}\bigl[\beta_{2, 2}(\tau, b)^{k\tau}\bigr] & =
\prod\limits_{l=0}^{k-1}
\Bigl[\frac{\Gamma(l\tau+b_0+b_1)\,\Gamma(l\tau+b_0+b_2)}{\Gamma(l\tau
+b_0)\,\Gamma(l\tau+b_0+b_1+b_2)}\Bigr].
\end{align}
The negative moments are
\begin{align}
{\bf E}\bigl[\beta_{2, 2}(\tau, b)^{-k}\bigr] & =
\prod\limits_{l=0}^{k-1} \Bigl[\frac{\Gamma\bigl((-(l+1)
+b_0)/\tau\bigr)\,\Gamma\bigl((-(l+1)+b_0+b_1+b_2)/\tau\bigr)}{\Gamma\bigl((-(l+1)+b_0+b_1)/\tau\bigr)\,
\Gamma\bigl((-(l+1)+b_0+b_2)/\tau\bigr)}\Bigr], \; k<b_0,\\
{\bf E}\bigl[\beta_{2, 2}(\tau, b)^{-k\tau}\bigr] & =
\prod\limits_{l=0}^{k-1} \Bigl[\frac{\Gamma(-(l+1)\tau
+b_0)\,\Gamma(-(l+1)\tau+b_0+b_1+b_2)}{\Gamma((-(l+1)\tau+b_0+b_1)\,\Gamma(-(l+1)\tau+b_0+b_2)}\Bigr],\;k\tau<b_0.
\end{align}
The positivity conditions in Corollary \ref{laplace} for $x>0$ are
\begin{align}
{\bf E}\Bigl[\exp\bigl(-x \beta_{2, 2}(\tau, b)\bigr)\Bigr] & =
\sum\limits_{k=0}^\infty \frac{(-x)^k}{k!} \prod\limits_{l=0}^{k-1}
\Bigl[\frac{\Gamma\bigl((l+b_0+b_1)/\tau\bigr)\,\Gamma\bigl((l+b_0+b_2)/\tau\bigr)}{\Gamma\bigl((l
+b_0)/\tau\bigr)\,\Gamma\bigl((l+b_0+b_1+b_2)/\tau\bigr)}\Bigr]>0, \\
{\bf E}\Bigl[\exp\bigl(-x \beta_{2, 2}(\tau, b)^\tau\bigr)\Bigr] & =
\sum\limits_{k=0}^\infty \frac{(-x)^k}{k!}
\prod\limits_{l=0}^{k-1}\Bigl[\frac{\Gamma(l\tau+b_0+b_1)\,\Gamma(l\tau+b_0+b_2)}
{\Gamma(l\tau +b_0)\,\Gamma(l\tau+b_0+b_1+b_2)}\Bigr]>0.
\end{align}
Corollary \ref{Rama} gives for $0<\Re(q)<b_0$ and
$0<\Re(q)<b_0/\tau,$ respectively,
\begin{align}
\Gamma(q)\, \eta_{2,2}(-q\,|\,\tau, b) & = \int\limits_0^\infty
x^{q-1}\Bigl\{ \sum\limits_{k=0}^\infty \frac{(-x)^k}{k!}
\prod\limits_{l=0}^{k-1}
\Bigl[\frac{\Gamma\bigl((l+b_0+b_1)/\tau\bigr)\,\Gamma\bigl((l+b_0+b_2)/\tau\bigr)}{\Gamma\bigl((l
+b_0)/\tau\bigr)\,\Gamma\bigl((l+b_0+b_1+b_2)/\tau\bigr)}\Bigr]\Bigr\} \,dx, \\
\Gamma(q)\, \eta_{2,2}(-q\tau\,|\,\tau, b) & = \int\limits_0^\infty
x^{q-1} \Bigl\{\sum\limits_{k=0}^\infty \frac{(-x)^k}{k!}
\prod\limits_{l=0}^{k-1}\Bigl[\frac{\Gamma(l\tau+b_0+b_1)\,\Gamma(l\tau+b_0+b_2)}
{\Gamma(l\tau +b_0)\,\Gamma(l\tau+b_0+b_1+b_2)}\Bigr]\Bigr\} \,dx.
\end{align}
Finally, the factorization equations in Theorem \ref{Factorization}
are
\begin{align} 
\eta_{2,2}(q\,|\,\tau, b) &  =
 \prod\limits_{k=0}^\infty\Bigl[
\frac{\Gamma((q+k+b_0)/\tau) }{\Gamma((k+b_0)/\tau)}
\frac{\Gamma((k+b_0+b_1)/\tau)}{\Gamma((q+k+b_0+b_1)/\tau)}
\frac{\Gamma((k+b_0+b_2)/\tau)}{\Gamma((q+k+b_0+b_2)/\tau)} \star
\nonumber \\ & \star
\frac{\Gamma((q+k+b_0+b_1+b_2)/\tau)}{\Gamma((k+b_0+b_1+b_2)/\tau)}\Bigr],
\\
\eta_{2,2}(q\,|\,\tau, b) & = \prod\limits_{k=0}^\infty\Bigl[
\frac{\Gamma(q+k\tau+b_0)}{\Gamma(k\tau+b_0)}
\frac{\Gamma(k\tau+b_0+b_1)}{\Gamma(q+k\tau+b_0+b_1)}
\frac{\Gamma(k\tau+b_0+b_2)}{\Gamma(q+k\tau+b_0+b_2)} \star
\nonumber \\ & \star
\frac{\Gamma(q+k\tau+b_0+b_1+b_2)}{\Gamma(k\tau+b_0+b_1+b_2)}\Bigr].
\end{align}

The density of $\beta_{2, 2}(\tau, b)$ can be computed by Laplace
transform inversion. This computation requires a separate study
similar to \cite{HubKuz} as the structure of the residues of
$\eta_{2, 2}(q\,|\,\tau, b)$ depends on the rationality of $\tau,$
$b_1,$ and $b_2.$ 

\section{$\beta_{2, 2}(\tau, b)$ and Selberg Integral}

In this section we will review the application of $\beta_{2,
2}(\tau, b)$ to the probabilistic structure of the celebrated
Selberg
integral that we developed in \cite{Me} using 
special properties of the Alexeiewsky-Barnes $G-$function (confer
the appendix of \cite{Me} for a review of the $G-$function). The
goal of re-formulating this structure here is to put it into the
general framework of the Barnes beta distributions, which leads to a
new interpretation of the Selberg integral.

The starting point of the probabilistic study of the Selberg
integral is the following remarkable formula due to Selberg
\cite{Selberg}. Given $0<\mu<2,$ $\lambda_i>-\mu/2,$ and $1\leq l<
2/\mu,$
\begin{align}
S_{\mu,l}\bigl[s^{\lambda_1}(1-s)^{\lambda_2}\bigr] & =
\prod_{k=0}^{l-1} \frac{\Gamma(1-(k+1)\mu/2)}{\Gamma(1-\mu/2)}
\frac{\Gamma(1+\lambda_1-k\mu/2)\Gamma(1+\lambda_2-k\mu/2)}
{\Gamma(2+\lambda_1+\lambda_2-(l+k-1)\mu/2)}, \label{Selberg} \\
S_{\mu,l}[\varphi] & \triangleq \int\limits_{[0,\,1]^l}
\prod_{i=1}^l \varphi(s_i)\, \prod\limits_{i<j}^l |s_i-s_j|^{-\mu}
ds_1\cdots ds_l. \label{Soperatordef}
\end{align}
The reader who is familiar with the classical approach to the
Selberg integral, confer \cite{ForresterBook}, will notice that we
have written Selberg's formula in a somewhat peculiar form. The
reason for restricting $0<\mu<2$ is that in this case, given a
general function $\varphi(s),$ $S_{\mu,l}[\varphi]$ equals the $l$th
moment of the probability distribution constructed by integrating
$\varphi(s)$ with respect to the limit lognormal stochastic
measure,\footnote{We will not attempt to quantify this statement
here as it would take us too far afield and it is solely used to
motivate Theorem \ref{BSM} and our interpretation of the Selberg
integral in Remark \ref{interptet}.} confer \cite{Me}. In an attempt
to compute this distribution for
$\varphi(s)=s^{\lambda_1}(1-s)^{\lambda_2},$ in \cite{Me} we
constructed\footnote{In the special case of $\lambda_1=\lambda_2=0$
an equivalent formula for the Mellin transform first appeared in
\cite{Me4}. The general case was first considered by \cite{FLDR},
who gave an equivalent expression for the right-hand side of
\eqref{M} and so matched the moments without proving that it
corresponds to a probability distribution.} and factorized a
probability distribution\footnote{If $M_{(\mu, \lambda_1,
\lambda_2)}$ is the sought distribution as conjectured in \cite{Me},
then the equality of their moments gives a probabilistic derivation
of Selberg's formula by \eqref{Soperatordef} and Theorem \ref{BSM}.}
having the moments given by Selberg's formula in \eqref{Selberg}.
\begin{theorem}\label{BSM}
Let $0<\mu<2,$ $\lambda_i>-\mu/2,$ and $\tau=2/\mu.$ Define the
Mellin transform
\begin{align}
{\bf E}\bigl[M^q_{(\mu, \lambda_1, \lambda_2)}\bigr] & \triangleq
\tau^{\frac{q}{\tau}} (2\pi)^{q}\,\Gamma^{-q}\bigl(1-1/\tau\bigr)
\frac{\Gamma_2(1-q+\tau(1+\lambda_1)\,|\,\tau)}{\Gamma_2(1+\tau(1+\lambda_1)\,|\,\tau)}\star
\nonumber \\ & \star
\frac{\Gamma_2(1-q+\tau(1+\lambda_2)\,|\,\tau)}{\Gamma_2(1+\tau(1+\lambda_2)\,|\,\tau)}
\frac{\Gamma_2(-q+\tau\,|\,\tau)}{\Gamma_2(\tau\,|\,\tau)}
\frac{\Gamma_2(2-q+\tau(2+\lambda_1+\lambda_2)\,|\,\tau)}{\Gamma_2(2-2q+\tau(2+\lambda_1+\lambda_2)\,|\,\tau)}
\label{M}
\end{align}
for $\Re(q)<\tau.$ Then, $M_{(\mu, \lambda_1, \lambda_2)}$ is a
probability distribution on $(0,\infty)$ and 
\begin{equation}
{\bf E}\bigl[M^l_{(\mu, \lambda_1, \lambda_2)}\bigr] =
S_{\mu,l}\bigl[s^{\lambda_1}(1-s)^{\lambda_2}\bigr],\; 1\leq l <
\tau.
\end{equation}
$\log M_{(\mu, \lambda_1, \lambda_2)}$ is absolutely continuous and
infinitely divisible.
\end{theorem}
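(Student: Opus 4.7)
The plan is to realize $M_{(\mu,\lambda_1,\lambda_2)}$ as an explicit factorization in terms of the Barnes beta distributions built in Theorem~\ref{main}, and to harvest the three claims from the structure of this factorization. With $\tau = 2/\mu$ and $b^{(0)} = (\tau,\, 1+\tau\lambda_1,\, 1+\tau\lambda_2)$ (which is an admissible parameter vector since $\lambda_i > -\mu/2$ forces $1+\tau\lambda_i > 0$), a direct comparison of (\ref{M}) with $\eta_{2,2}(-q\,|\,\tau,b^{(0)})$ shows that the first three $\Gamma_2$-ratios in (\ref{M}) recover three of the four factors of $\eta_{2,2}(-q\,|\,\tau,b^{(0)})$ verbatim (up to inversion). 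The sole obstacle is the fourth $\Gamma_2$-ratio, featuring the anomalous $2-2q$ shift rather than the $2-q$ shift that a Mellin transform of a Barnes beta would produce.

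To resolve this anomalous factor I would apply the functional equation (\ref{feq}), in the forms $\Gamma_2(w+\tau\,|\,\tau) = \Gamma_2(w\,|\,\tau)/\Gamma_1(w\,|\,1)$ and $\Gamma_2(w+1\,|\,\tau) = \Gamma_2(w\,|\,\tau)/\Gamma_1(w\,|\,\tau)$, together with (\ref{gamma1}) which gives the scalar identities $\Gamma_1(w\,|\,1) = \Gamma(w)/\sqrt{2\pi}$ and $\Gamma_1(w\,|\,\tau) = \tau^{w/\tau-1/2}(2\pi)^{-1/2}\Gamma(w/\tau)$, to split the $2-2q$ shift as a $2-q$ shift at the $\Gamma_2$-level composed with an independent $-q$ shift pulled out as a $\Gamma_1$-ratio. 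The $\Gamma_1$-ratio is then exactly the Mellin transform of a $\beta_{1,1}^{-1}(a',b')$ for explicit positive $(a',b')$, while the residual scalar contributions of the form $\tau^{q/\tau}$, $(2\pi)^q$, and $\Gamma^{-q}(1-1/\tau)$ absorb the prefactor in (\ref{M}). The net output is
\[
M_{(\mu,\lambda_1,\lambda_2)} \;\overset{\rm in\,law}{=}\; C\cdot \beta_{2,2}^{-1}(\tau,b^{(0)})\cdot \beta_{1,1}^{-1}(a',b')
\]
for an explicit constant $C > 0$, with the two beta factors independent, so Theorem~\ref{main} immediately identifies $M$ as a probability law on $(0,\infty)$ whose Mellin transform is the right-hand side of (\ref{M}).

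For the moment identity, I would substitute $q = l \in \mathbb{N}$ with $1 \leq l < \tau$ into (\ref{M}) and iterate (\ref{feq}) $l$ times to collapse each $\Gamma_2$-quotient into a telescoping product of $\Gamma_1$'s; converting each $\Gamma_1$ via (\ref{gamma1}) into an ordinary $\Gamma$, a term-by-term bookkeeping check (with the $\tau^{\ldots}$ and $(2\pi)^{\ldots}$ powers canceling against the prefactor) matches the answer to Selberg's formula (\ref{Selberg}) for $\varphi(s) = s^{\lambda_1}(1-s)^{\lambda_2}$. Absolute continuity and infinite divisibility of $\log M$ then follow from the factorization above together with Corollary~\ref{Structure}: $\log \beta_{2,2}$ is absolutely continuous and infinitely divisible because $M = N = 2$, and $\log\beta_{1,1}$ is absolutely continuous and infinitely divisible because $M = N = 1$; both properties are preserved under independent sums and translation by the constant $\log C$.

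The main obstacle is the $\Gamma_2/\Gamma_1$ splitting of the anomalous fourth factor in the second paragraph: the $2-2q$ argument is not produced by any single Barnes beta Mellin transform, so the success of the strategy hinges on the fact that the two periods $a_1 = 1$ and $a_2 = \tau$ of $\Gamma_2(\cdot\,|\,\tau)$ allow one to extract a $-q$ shift cleanly through (\ref{feq}) and to dispose of the resulting $\Gamma_1$ via (\ref{gamma1}), leaving a $\beta_{1,1}^{-1}$ Mellin transform with admissible positive parameters $(a',b')$ and an overall multiplicative scalar that exactly matches $\tau^{q/\tau}(2\pi)^q \Gamma^{-q}(1-1/\tau)$ in (\ref{M}). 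Once this delicate bookkeeping is executed, the remainder of the proof is a direct application of the general theory assembled in Sections~2--3.
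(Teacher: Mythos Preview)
Your proposed factorization $M_{(\mu,\lambda_1,\lambda_2)} \overset{\rm in\,law}{=} C\cdot \beta_{2,2}^{-1}(\tau,b^{(0)})\cdot \beta_{1,1}^{-1}(a',b')$ cannot work, for two independent reasons.

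First, the sign pattern of the $\Gamma_2$-ratios is wrong. In $\eta_{2,2}(-q\,|\,\tau,b)$ exactly two of the four ratios carry $\Gamma_2(X-q)$ in the numerator (those attached to $b_0$ and to $b_0+b_1+b_2$), while the other two carry it in the denominator; this alternation is forced by the $(-1)^p$ in Definition~\ref{Soperator}. In \eqref{M}, by contrast, the first \emph{three} ratios all have the form $\Gamma_2(X-q)/\Gamma_2(X)$. Your parenthetical ``up to inversion'' hides a genuine obstruction: no choice of $b^{(0)}$ makes three of the four factors of $\eta_{2,2}(-q\,|\,\tau,b^{(0)})$ match three factors of \eqref{M}, and multiplying by a single $\beta_{1,1}^{-1}$ contributes only $\Gamma_1$-factors and cannot repair a $\Gamma_2$-level sign mismatch.

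Second, your mechanism for the anomalous fourth factor is invalid. The functional equation \eqref{feq} shifts the argument of $\Gamma_2$ by a \emph{fixed period} $a_i\in\{1,\tau\}$, not by the variable $q$; it therefore cannot ``split the $2-2q$ shift as a $2-q$ shift composed with an independent $-q$ shift pulled out as a $\Gamma_1$-ratio.'' There is no identity of the form $\Gamma_2(w-q\,|\,\tau)=\Gamma_2(w\,|\,\tau)\cdot\Gamma_1(\cdots)$ with $q$ arbitrary.

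The paper does not reprove Theorem~\ref{BSM} here; it is quoted from \cite{Me}. The structural result actually established in the present framework is Theorem~\ref{B2}, whose decomposition is considerably more elaborate than yours: $M$ is a constant times \emph{three} independent $\beta_{2,2}^{-1}$ factors $X_1,X_2,X_3$ (with specially matched parameters, some of which are allowed to be negative subject to $b_1b_2>0$), times a lognormal $L$, times a Fr\'echet-type variable $Y$. The three $\beta_{2,2}^{-1}$'s together supply enough $\Gamma_2$-ratios, with the right orientations, to reproduce \eqref{M}; $L$ and $Y$ absorb the scalar prefactor and the residual pieces that no product of Barnes betas accounts for. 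Your moment-matching step via iteration of \eqref{feq} is fine in principle, but it presupposes a correct factorization, which your proposal does not provide.
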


We can now relate the Selberg integral and $\beta_{2,2}(\tau, b).$
Let $\tau>1$ and define
\begin{equation}
L \triangleq \exp\bigl(\mathcal{N}(0,\,4\log 2/\tau)\bigr), \; Y
\triangleq \tau\,y^{-1-\tau}\exp\bigl(-y^{-\tau}\bigr)\,dy,\; y>0,
\end{equation}
\emph{i.e.} $\log L$ is a zero-mean normal with variance $4\log
2/\tau$ and $Y$ is a power of the exponential. Given
$\lambda_i>-1/\tau,$ let $X_1,\,X_2,\,X_3$ have the $\beta^{-1}_{2,
2}(\tau, b)$ distribution with the parameters\footnote{The
parameters of $X_1$
satisfy $b_1 b_2>0,$ which is sufficient for Theorem \ref{main} to
hold by Theorem 3.5 in \cite{Me}.}
\begin{align}
X_1 &\triangleq \beta_{2,2}^{-1}\Bigl(\tau,
b_0=1+\tau+\tau\lambda_1,\,b_1=\tau(\lambda_2-\lambda_1)/2, \,
b_2=\tau(\lambda_2-\lambda_1)/2\Bigr),\\
X_2 & \triangleq \beta_{2,2}^{-1}\Bigl(\tau,
b_0=1+\tau+\tau(\lambda_1+\lambda_2)/2,\,b_1=1/2,\,b_2=\tau/2\Bigr),\\
X_3 & \triangleq \beta_{2,2}^{-1}\Bigl(\tau, b_0=1+\tau,\,
b_1=(1+\tau+\tau\lambda_1+\tau\lambda_2)/2, \,
b_2=(1+\tau+\tau\lambda_1+\tau\lambda_2)/2\Bigr).
\end{align}
\begin{theorem}\label{B2} Let $\tau=2/\mu.$ $M_{(\mu, \lambda_1, \lambda_2)}$
decomposes into independent factors,
\begin{equation}\label{Decomposition}
M_{(\mu, \lambda_1, \lambda_2)} \overset{{\rm in \,law}}{=} 2\pi\,
2^{-\bigl[3(1+\tau)+2\tau(\lambda_1+\lambda_2)\bigr]/\tau}\,\Gamma\bigl(1-1/\tau\bigr)^{-1}\,
L\,X_1\,X_2\,X_3\,Y.
\end{equation}
\end{theorem}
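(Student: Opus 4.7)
The plan is to verify \eqref{Decomposition} by matching Mellin transforms. Both sides are positive random variables, and their Mellin transforms extend holomorphically on a common open vertical strip, so uniqueness of the Mellin transform forces equality in law.

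The first step is to compute the Mellin transform of each factor on the right-hand side. We have ${\bf E}[L^q] = 2^{2q^2/\tau}$, the moment generating function of the centered Gaussian $\log L$ of variance $4\log 2/\tau$. Next, ${\bf E}[Y^q] = \Gamma(1-q/\tau)$ for $\Re(q) < \tau$, obtained by the substitution $u = y^{-\tau}$ under which $Y$ becomes the $-1/\tau$ power of a standard exponential. Finally, Theorem \ref{main} applied to $\beta_{2,2}^{-1}$ gives ${\bf E}[X_i^q] = \eta_{2,2}(-q\,|\,\tau, b^{(i)})$, the explicit product of four $\Gamma_2(\cdot|\tau)$-ratios displayed in Example \ref{example}.

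By independence, the Mellin transform of the right-hand side of \eqref{Decomposition} is $C^q\cdot 2^{2q^2/\tau}\cdot \Gamma(1-q/\tau)\cdot \prod_{i=1}^3 \eta_{2,2}(-q\,|\,\tau, b^{(i)})$, where $C$ denotes the displayed deterministic constant. I would match this expression to \eqref{M} via two algebraic reductions. First, the $\Gamma_2$ functional equation \eqref{feq} combined with \eqref{gamma1} extracts an ordinary $\Gamma(\cdot/\tau)$ and a power of $\tau$ from every integer shift; applied to the $X_3$ factor $\Gamma_2(-q+1+\tau|\tau)/\Gamma_2(1+\tau|\tau)$, this yields the third $\Gamma_2$-ratio of \eqref{M} together with $\tau^{q/\tau}\,\Gamma(1-q/\tau)^{-1}$, so that $\Gamma(1-q/\tau)^{-1}$ cancels ${\bf E}[Y^q]$. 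Second, a duplication formula for $\Gamma_2(\cdot\,|\,1,\tau)$ combines the half-shifted $\Gamma_2$ factors arising in $X_2$ (with parameter shifts $1/2$ and $\tau/2$) and in $X_3$ (with shift $(1+\tau)/2$) into a single $\Gamma_2$ factor whose argument contains $-2q$, matching the fourth ratio of \eqref{M}; this duplication releases a quadratic-exponential-in-$q$ prefactor that must equal $2^{2q^2/\tau}$ and is thus absorbed by ${\bf E}[L^q]$. The half-shifted $X_1$-terms with $b_0+b_1 = b_0+b_2 = 1+\tau+\tau(\lambda_1+\lambda_2)/2$ cancel the leading ratio in $X_2$'s expansion, leaving the first and last ratios of $X_1$ to match the first two $\Gamma_2$-ratios of \eqref{M}.

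The main obstacle is to pin down the precise form of the $\Gamma_2$ duplication formula in the $(1,\tau)$-normalization and then bookkeep all prefactors. One must verify that the quadratic-in-$q$ coefficient produced by duplication equals exactly $(2\log 2)/\tau$, so as to match $L$; and that the residual $q$-linear exponentials, together with the constants from the \eqref{gamma1}-conversions, reproduce the factors $(2\pi)^q\,\Gamma(1-1/\tau)^{-q}$ and $\tau^{q/\tau}$ of \eqref{M}, while the $q$-independent constants collapse to $C^{-1}$ with $C$ as displayed. Consistency at $q=0$ (both sides equal $1$) provides a sanity check, and evaluation at $q=1$ against ${\bf E}[M] = S_{\mu,1}[s^{\lambda_1}(1-s)^{\lambda_2}]$ via Selberg's formula \eqref{Selberg} offers a substantive verification of the deterministic constant.
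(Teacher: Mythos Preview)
The paper does not actually prove Theorem~\ref{B2}: Section~4 is an explicit review of results established in \cite{Me}, and Section~5 contains proofs only of the statements from Section~2. So there is no in-paper argument to compare your proposal against.

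That said, your strategy is the right one and is essentially how the result is obtained in \cite{Me}: match Mellin transforms on a common strip, use the functional equation \eqref{feq} together with \eqref{gamma1} to peel off Euler-gamma factors from the $X_3$ block (this is what produces $\tau^{q/\tau}$ and the $\Gamma(1-q/\tau)$ that cancels ${\bf E}[Y^q]$), and invoke a duplication identity for $\Gamma_2(\cdot\,|\,1,\tau)$ to manufacture the argument $-2q$ in the last ratio of \eqref{M}, with the released quadratic exponential absorbing ${\bf E}[L^q]=2^{2q^2/\tau}$. The telescoping you describe among the $X_1$ and $X_2$ ratios is also correct.

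Two small points. First, your pointer to ``Example~\ref{example}'' is off: that example lists the densities of $\beta_{0,0},\beta_{0,1},\beta_{1,1},\beta_{0,2}$, not the four-ratio formula for $\eta_{2,2}$; the expression you need is the unnumbered display for $\eta_{2,2}(q\,|\,\tau,b)$ at the beginning of Section~3. Second, as you yourself note, the proposal stops precisely at the substantive step --- writing down the $\Gamma_2$ duplication formula in the present normalization and tracking all the $q$-linear and constant prefactors through to the displayed constant $C$. That bookkeeping \emph{is} the proof; what you have is a correct and well-organized plan rather than a completed argument.
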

\begin{remark}[Interpretation of Selberg Integral]\label{interptet}
The function $s^{\lambda_1} (1-s)^{\lambda_2}$ in \eqref{Selberg}
is, up to a constant, the density of $\beta_{1, 1}\bigl(a_1=1,
b_0=1+\lambda_1, b_1=1+\lambda_2\bigr),$
confer \eqref{beta}. 
Selberg's formula and Theorems \ref{BSM} and \ref{B2} extend the
integral $S_{\mu, l}\bigl[\text{pdf of} \,\beta_{1,1}\bigr],$ viewed
as a function of $l,$ to the Mellin transform of $const
\,L\,X_1\,X_2\,X_3\,Y.$
\begin{equation}
\text{pdf of} \,\beta_{1,1} \xrightarrow{\text{Selberg}} S_{\mu,
l}\bigl[\text{pdf of} \,\beta_{1,1}\bigr] \xrightarrow{\text{Ths.
\ref{BSM}, \ref{B2}}} const \,L\,X_1\,X_2\,X_3\,Y,
\end{equation}
\emph{i.e.} the Selberg integral can be interpreted
probabilistically as a transformation of $\beta_{1,1}$ into the
product in \eqref{Decomposition}. It is an open question how to
extend this mechanism to $\beta_{M, M},$ \emph{i.e.} how to compute
a probability distributions having $S_{\mu, l}\bigl[\text{pdf of}
\,\beta_{M,M}\bigr]$ as its moments.
\end{remark}

\section{Proofs}
In this section we will give proofs of the results in Section 2. The
proofs rely on Theorem 1.1, properties of infinitely divisible
distributions, and the following lemma.
\begin{lemma}[Main Lemma]\label{mylemma}
Let $f(t)$ be of the Ruijsenaars class and the generalized Bernoulli
polynomials be defined by \eqref{Bdefa}. Let $\lbrace b_k\rbrace,$
$k\in\mathbb{N},$ be a sequence of real
numbers, 
$n, r\in\mathbb{N},$ and $q\in\mathbb{C}.$ Define the function
$g(t)$
\begin{equation}\label{gfunction}
g(t) \triangleq f(t) e^{-qt} \frac{d^r}{dt^r}\bigl[e^{-b_0
t}\prod\limits_{j=1}^N (1-e^{-b_j t})\bigr].
\end{equation}
Then,
\begin{align}
g^{(n)}(0) & = \sum\limits_{m=0}^n \binom{n}{m} B^{(f)}_{n-m}(q)
\frac{d^{m+r}}{dt^{m+r}}|_{t=0}\bigl[e^{-b_0 t}\prod\limits_{j=1}^N
(1-e^{-b_j t})\bigr], \label{line1} \\
& = (-1)^r \sum\limits_{p=0}^N (-1)^p
\sum\limits_{k_1<\cdots<k_p=1}^N \bigl(b_0+\sum b_{k_j}\bigr)^r \,
B^{(f)}_n\bigl(q+b_0+\sum b_{k_j}\bigr), \label{line2} \\
&= 0,\;{\rm if}\;r+n<N, \label{deriv1}\\
&= f(0)\,N! \prod\limits_{j=1}^N b_j,\;{\rm
if}\;r+n=N.\label{deriv2}
\end{align}
\end{lemma}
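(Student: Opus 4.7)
The plan is to prove the four assertions in sequence, using (\ref{line1}) as the workhorse and deriving (\ref{deriv1}) and (\ref{deriv2}) as corollaries of the order-of-vanishing of the auxiliary function
\[
F(t) \triangleq e^{-b_0 t}\prod_{j=1}^N(1-e^{-b_j t}).
\]

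First, to obtain (\ref{line1}), I would write $g(t)=[f(t)e^{-qt}]\cdot F^{(r)}(t)$ and apply Leibniz's formula at $t=0$:
\[
g^{(n)}(0)=\sum_{m=0}^n\binom{n}{m}\,\frac{d^{n-m}}{dt^{n-m}}\Big|_{t=0}\!\bigl[f(t)e^{-qt}\bigr]\cdot\frac{d^m}{dt^m}\Big|_{t=0}F^{(r)}(t).
\]
The first factor is $B^{(f)}_{n-m}(q)$ by the very definition \eqref{Bdefa}, and the second factor is $F^{(m+r)}(0)$, giving (\ref{line1}) immediately.

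Second, for (\ref{line2}), I expand the product in $F(t)$ via the binomial identity
\[
\prod_{j=1}^N(1-e^{-b_j t})=\sum_{p=0}^N(-1)^p\sum_{k_1<\cdots<k_p=1}^N e^{-(b_{k_1}+\cdots+b_{k_p})t},
\]
so that $F(t)=\sum_p(-1)^p\sum_{k_1<\cdots<k_p}e^{-(b_0+\sum_j b_{k_j})t}$. Differentiating $r$ times pulls out a factor $(-1)^r(b_0+\sum_j b_{k_j})^r$, and then multiplying by $f(t)e^{-qt}$ produces a sum of terms of the form $f(t)e^{-(q+b_0+\sum_j b_{k_j})t}$. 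Applying $d^n/dt^n|_{t=0}$ term by term and recognising each contribution as a generalized Bernoulli polynomial $B^{(f)}_n$ yields (\ref{line2}).

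For (\ref{deriv1}) and (\ref{deriv2}) I would use the Taylor expansion $1-e^{-b_j t}=b_j t+O(t^2)$, so that $F(t)=\bigl(\prod_{j=1}^N b_j\bigr)t^N+O(t^{N+1})$, giving $F^{(k)}(0)=0$ for $k<N$ and $F^{(N)}(0)=N!\prod_{j=1}^N b_j$. Substituting into (\ref{line1}): whenever $r+n<N$, every term in the sum has $m+r\le n+r<N$ and hence $F^{(m+r)}(0)=0$, so $g^{(n)}(0)=0$. When $r+n=N$, only $m=n$ survives, giving
\[
g^{(n)}(0)=\binom{n}{n}B^{(f)}_0(q)\,F^{(N)}(0)=f(0)\,N!\,\prod_{j=1}^N b_j,
\]
using $B^{(f)}_0(q)=f(0)$. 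There is no real obstacle here; the only care needed is to keep the bookkeeping of signs and binomial coefficients straight, and to observe that (\ref{deriv1}) and (\ref{deriv2}) are easier to read off from the Leibniz form (\ref{line1}) than from the alternating-sum form (\ref{line2}), where the vanishing would otherwise appear as a nontrivial identity for sums of values of $B^{(f)}_n$.
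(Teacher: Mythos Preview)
Your proposal is correct and follows essentially the same route as the paper: Leibniz plus the definition \eqref{Bdefa} for \eqref{line1}, the binomial expansion of the product for \eqref{line2}, and the order-$N$ vanishing of $F(t)$ at $t=0$ for \eqref{deriv1}--\eqref{deriv2}. The only cosmetic difference is that the paper reads \eqref{deriv1} directly off the definition of $g(t)$ (since $F^{(r)}$ vanishes to order $N-r$) rather than routing it through the Leibniz sum \eqref{line1}, but this is the same observation.
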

\begin{proof}
The expression in \eqref{line1} follows from \eqref{Bdefa}. Using
the identity
\begin{equation}
\prod\limits_{j=1}^N (1-e^{-b_j t}) = \sum\limits_{p=0}^N (-1)^p
\sum\limits_{k_1<\cdots<k_p=1}^N
\exp\bigl(-(b_{k_1}+\cdots+b_{k_p})t\bigr),
\end{equation}
we can write
\begin{equation}\label{exponID}
\frac{d^r}{dt^r}\bigl[e^{-b_0 t}\prod\limits_{j=1}^N (1-e^{-b_j
t})\bigr] = (-1)^r \sum\limits_{p=0}^N (-1)^p
\sum\limits_{k_1<\cdots<k_p=1}^N \bigl(b_0+\sum b_{k_j}\bigr)^r
\exp\bigl(-(b_0+\sum b_{k_j})t\bigr).
\end{equation}
Substituting this expression into \eqref{gfunction} and recalling
\eqref{Bdefa}, we obtain \eqref{line2}. \eqref{deriv1} is immediate
from the definition of $g(t)$ in \eqref{gfunction}, and
\eqref{deriv2} follows from \eqref{line1}.
\end{proof}
\begin{corollary}\label{mainauxID}
\begin{align}
\bigl(\mathcal{S}_N\,B^{(f)}_n\bigr)(q)& =0,\; n=0\cdots N-1,
\label{mainID} \\
\bigl(\mathcal{S}_N\,B^{(f)}_N\bigr)(q)&=f(0) \,N!
\prod\limits_{j=1}^N b_j, \label{mainIDN} \\
\bigl(\mathcal{S}_N \,x^n\bigr)(q)&=0,\;n=0\cdots N-1, \label{auxID}\\
\bigl(\mathcal{S}_N \,x^N\bigr)(q)&=(-1)^N N!\prod\limits_{j=1}^N
b_j. \label{auxIDN}
\end{align}
\end{corollary}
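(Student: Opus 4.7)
The plan is to deduce all four identities as direct specializations of Lemma \ref{mylemma}. The key observation is that setting the parameter $r=0$ in the lemma collapses its conclusion \eqref{line2} into exactly $(\mathcal{S}_N B^{(f)}_n)(q)$, and the resulting vanishing/non-vanishing statements \eqref{deriv1}, \eqref{deriv2} are what the corollary asserts.

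First I would prove \eqref{mainID} and \eqref{mainIDN}. Take $r=0$ in Lemma \ref{mylemma}, so that $g(t)=f(t)e^{-qt}\prod_{j=1}^N(1-e^{-b_j t})$ with the $e^{-b_0 t}$ factor absorbed. Equation \eqref{line2} becomes
\begin{equation*}
g^{(n)}(0)=\sum_{p=0}^N(-1)^p\sum_{k_1<\cdots<k_p=1}^N B^{(f)}_n\bigl(q+b_0+b_{k_1}+\cdots+b_{k_p}\bigr)=\bigl(\mathcal{S}_N B^{(f)}_n\bigr)(q),
\end{equation*}
by Definition \ref{Soperator}. Then \eqref{deriv1} with $r=0$ gives $g^{(n)}(0)=0$ whenever $n<N$, which is \eqref{mainID}, and \eqref{deriv2} with $r=0$ gives $g^{(N)}(0)=f(0)\,N!\prod_{j=1}^N b_j$, which is \eqref{mainIDN}.

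Next I would derive the monomial identities \eqref{auxID} and \eqref{auxIDN} by applying the identities just proved to the specific choice $f\equiv 1$, which is trivially in the Ruijsenaars class. For this $f$, Definition \eqref{Bdefa} gives $B^{(1)}_n(x)=\frac{d^n}{dt^n}\big|_{t=0}e^{-xt}=(-x)^n$. Since $\mathcal{S}_N$ is linear in its argument function, \eqref{mainID} applied to $f\equiv 1$ yields $(\mathcal{S}_N(-\cdot)^n)(q)=0$ for $n=0,\ldots,N-1$, i.e. $(\mathcal{S}_N x^n)(q)=0$, which is \eqref{auxID}. Similarly \eqref{mainIDN} with $f\equiv 1$ gives $(\mathcal{S}_N(-\cdot)^N)(q)=N!\prod_{j=1}^N b_j$, hence $(\mathcal{S}_N x^N)(q)=(-1)^N N!\prod_{j=1}^N b_j$, proving \eqref{auxIDN}.

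There is no real obstacle here; the work is entirely bookkeeping once one notices that the $r=0$ slice of the lemma already encodes the operator $\mathcal{S}_N$ acting on $B^{(f)}_n$, and that the monomial statements are the constant-$f$ specialization of the Bernoulli statements. The only point that warrants care is verifying that $f\equiv 1$ genuinely lies in the Ruijsenaars class so that Lemma \ref{mylemma} applies, which is immediate.
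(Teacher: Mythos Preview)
Your proof is correct and follows essentially the same approach as the paper: set $r=0$ in Lemma~\ref{mylemma} so that \eqref{line2} reduces to $(\mathcal{S}_N B^{(f)}_n)(q)$ via Definition~\ref{Soperator}, read off \eqref{mainID}--\eqref{mainIDN} from \eqref{deriv1}--\eqref{deriv2}, and then specialize to $f\equiv 1$ (giving $B^{(1)}_n(x)=(-x)^n$) to obtain \eqref{auxID}--\eqref{auxIDN}. One cosmetic slip: with $r=0$ the function is $g(t)=f(t)e^{-qt}\,e^{-b_0 t}\prod_{j=1}^N(1-e^{-b_jt})$, so the $e^{-b_0 t}$ factor is still present rather than ``absorbed''; this does not affect the argument since you correctly recover the $b_0$ shift in the next line.
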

\begin{proof}
\eqref{mainID} and \eqref{mainIDN} follow from Lemma \ref{mylemma}
by setting $r=0$ and recalling \eqref{S}. \eqref{auxID} and
\eqref{auxIDN} follows from \eqref{mainID} and \eqref{mainIDN} by
letting $f(t)=1$ in Lemma \ref{mylemma} so that the corresponding
Bernoulli polynomials are $B^{(f)}_n(x) = (-x)^n.$
\end{proof}
\begin{proof}[Proof of Theorem \ref{main}]
Let $M\leq N$ and $\Re(q)>-b_0.$ We start with Definition \ref{bdef}
and substitute \eqref{key} for $L_M(w).$ By \eqref{mainID} in
Corollary \ref{mainauxID} and linearity of $\mathcal{S}_N,$ we
obtain
\begin{equation}\label{inter}
\eta_{M,N}(q\,|\,b) = \exp\Bigl(\int\limits_0^\infty
\Bigl[\bigl(\mathcal{S}_N \,\exp(-xt)\bigr)(q)-\bigl(\mathcal{S}_N
\,\exp(-xt)\bigr)(0)\Bigr]f(t) dt/t^{M+1}\Bigr).
\end{equation}
Letting $r=0$ in \eqref{exponID}, we have the identity
\begin{equation}
e^{-b_0 t} e^{-qt}\prod\limits_{j=1}^N (1-e^{-b_j t}) =
\bigl(\mathcal{S}_N \exp(-xt)\bigr)(q)
\end{equation}
so that \eqref{inter} can be simplified to
\begin{equation}
\eta_{M,N}(q\,|\,b) = \exp\Bigl(\int\limits_0^\infty
(e^{-qt}-1)e^{-b_0t}\prod\limits_{j=1}^N (1-e^{-b_j
t})f(t)\,dt/t^{M+1}\Bigr).
\end{equation}
\noindent This is the canonical representation of the Laplace
transform of an infinitely divisible distribution on $[0,\,\infty),$
confer Theorem 4.3 in Chapter 3 of \cite{SteVHar}.
\begin{align}
\eta_{M,N}(q\,|\,b) & = \exp\Bigl(-\int\limits_0^\infty
(1-e^{-tq})dK^{(f)}_{M,N}(t\,|\,b)/t\Bigr), \label{Keq}\\
dK^{(f)}_{M,N}(t\,|\,b) & \triangleq e^{-b_0t} \prod\limits_{j=1}^N
(1-e^{-b_j t}) f(t)\,dt/t^M. \label{Kdef}
\end{align}
It remains to note that $dK^{(f)}_{M,N}(t\,|\,b)$ satisfies the
required integrability condition
\begin{equation}
\int\limits_0^\infty e^{-st} dK^{(f)}_{M,N}(t\,|\,b) =
\int\limits_0^\infty e^{-st} e^{-b_0t}\prod\limits_{j=1}^N
(1-e^{-b_j t}) f(t)\,dt/t^M<\infty, \;s>0.
\end{equation}
Denote this non-negative distribution by $-\log\beta_{M,N}(b)$ so
that $\beta_{M,N}(b)\in(0, \,1]$ and
\begin{equation}
{\bf E}\Bigl[\exp\bigl(q\log\beta_{M,N}(b)\bigr)\Bigr] =
\eta_{M,N}(q\,|\,b), \;\Re(q)>-b_0.
\end{equation}
This is equivalent to \eqref{LKH}.
\end{proof}
\begin{proof}[Proof of Corollary \ref{Structure}]
We note that
\begin{equation}
\int\limits_0^\infty dK^{(f)}_{M,N}(t\,|\,b)/t <
\infty\;\text{iff}\; M<N.
\end{equation}
It follows from Proposition 4.13 in Chapter 3 of \cite{SteVHar} that
$\log\beta_{M,N}(b)$ is absolutely continuous if $M=N.$ If $M<N,$
$-\log\beta_{M,N}(b)$ is compound Poisson by Theorem \ref{main} and
Proposition 4.4 in Chapter 3 of \cite{SteVHar}. In particular,
\begin{equation}\label{lambda}
{\bf P}\bigl[\log\beta_{M,N}(b)=0\bigr] =
\exp\Bigl(-\int\limits_0^\infty dK^{(f)}_{M,N}(t\,|\,b)/t\Bigr).
\end{equation}
The result follows from \eqref{Kdef}.
\end{proof}
\begin{proof}[Proof of Theorem \ref{Asymptotics}]
The starting point of the proof is \eqref{asym}. Substituting
\eqref{asym} into \eqref{eta} and using linearity of
$\mathcal{S}_N,$ we can write in the limit of $q\rightarrow \infty,$
$|\arg(q)|<\pi,$
\begin{align}
\eta_{M,N}(q\,|\,b) & = \exp\Bigl(-\bigl(\mathcal{S}_N
L_M\bigr)(0\,|\,b)\Bigr)\exp\Bigl(-\frac{1}{M!}
\mathcal{S}_N\bigl(B^{(f)}_M(w)\,\log(w)\bigr)(q\,|\,b)+\nonumber\\
& + \sum\limits_{k=0}^M \frac{B^{(f)}_k(0)
\bigl(\mathcal{S}_N(-w)^{M-k}\bigr)(q\,|\,b)}{k!(M-k)!}\sum\limits_{l=1}^{M-k}
\frac{1}{l}+O(q^{-1})\Bigr).\label{sum2}
\end{align}
Now, to compute
$\mathcal{S}_N\bigl(B^{(f)}_M(w)\,\log(w)\bigr)(q\,|\,b),$ we expand
the logarithm in powers of $1/q,$ resulting in terms of the form
\eqref{line2} with $n=M.$ By \eqref{line1} in Lemma \ref{mylemma},
if $r+m>M,$ then such terms are of order $O(1/q).$ If $r+m\leq M$
and $M<N,$ they are all zero by \eqref{line1}. If $r+m\leq M$ and
$M=N,$ the only non-zero terms satisfy $r+m=N$ so that they have
degree zero in $q.$ Hence, we have the estimate
\begin{align}
\mathcal{S}_N\bigl(B^{(f)}_M(w)\,\log(w)\bigr)(q\,|\,b) & =
\log(q)\,\mathcal{S}_N\bigl(B^{(f)}_M(w)\bigr)(q\,|\,b) + O(q^{-1}),
\;
{\rm if}\; M<N, \label{estimate}\\
& = \log(q)\,\mathcal{S}_N\bigl(B^{(f)}_M(w)\bigr)(q\,|\,b) + O(1),
\; {\rm if}\; M=N.
\end{align}
If $M<N,$ the expression in \eqref{estimate} is zero by
\eqref{mainID} and the sum in \eqref{sum2} is zero by \eqref{auxID}
so that \eqref{ourasym} follows from \eqref{sum2}. If $M=N,$ the
result follows from \eqref{mainIDN} and \eqref{auxIDN}.
\end{proof}
\begin{proof}[Proof of Corollary \ref{Inequality}]
The result follows from \eqref{ourasym} by letting $q\rightarrow
+\infty$ in \eqref{Keq} and recalling \eqref{lambda}.
\end{proof}
\begin{proof}[Proof of Theorem \ref{FunctEquat}]
It is sufficient to substitute \eqref{feq}, written in the form
\begin{equation}
L_M\bigl(w+a_i\,|\,a\bigr) = L_{M}(w\,|\,a) -
L_{M-1}(w\,|\,\hat{a}_i),
\end{equation}
into \eqref{eta} and recall the definition of
$\eta_{M-1,N}(q\,|\,\hat{a}_i, b).$
\end{proof}
\begin{proof}[Proof of Corollary \ref{FunctSymmetry}]
To prove \eqref{fe2}, note that Definition \ref{Soperator} implies
the identity
\begin{equation}
(\mathcal{S}_Nf)(q\,|\,b_0+x) = (\mathcal{S}_Nf)(q+x\,|\,b),
\end{equation}
and the result follows from Definition \ref{bdef}. \eqref{fe3} is
immediate from Definition \ref{bdef}. \eqref{fe1eq} is equivalent to
\eqref{fe1} due to the special case of $q=0$ in \eqref{fe1},
\begin{equation}\label{auxidentity}
\eta_{M, N}(a_i\,|\,a, b) = \exp\bigl(-(\mathcal{S}_N
L_{M-1})(0\,|\,\hat{a}_i, b)\bigr).
\end{equation}
The proof of \eqref{fe4} follows from \eqref{fe3}, \eqref{fe2}, and
\eqref{fe1eq}, in this order.
\begin{align}
\eta_{M, N}(q\,|\,a,\,b_j+a_i) & = \frac{\eta_{M, N-1}(q\,|\,a,\,\hat{b}_j)}{\eta_{M, N-1}(q\,|\,a,\,b_0+b_j+a_i, \hat{b}_j)}, \nonumber \\
& = \eta_{M, N-1}(q\,|\,a,\,\hat{b}_j)\frac{\eta_{M, N-1}(a_i\,|\,a,\,b_0+b_j, \hat{b}_j)}{\eta_{M, N-1}(q+a_i\,|\,a,\,b_0+b_j, \hat{b}_j)}, \nonumber \\
& =  \eta_{M-1, N-1}(q\,|\,\hat{a}_i,\,b_0+b_j,
\hat{b}_j)\frac{\eta_{M, N-1}(q\,|\,a,\,\hat{b}_j)}{\eta_{M,
N-1}(q\,|\,a,\,b_0+b_j, \hat{b}_j)}.
\end{align}
The result follows by yet another application of \eqref{fe2} and
\eqref{fe3}. Finally, to verify \eqref{funceqsymmetry}, we combine
\eqref{fe3} and \eqref{fe1eq} to obtain
\begin{align}
\eta_{M, N}(q+a_i\,|\,a,\,b) & = \eta_{M, N}(q\,|\,a,\,b) \,\eta_{M,
N}(a_i\,|\,a,\,b) \frac{\eta_{M-1, N-1}(q\,|\,\hat{a}_i,\,b_0+b_j,
\hat{b}_j)}{\eta_{M-1, N-1}(q\,|\,\hat{a}_i,\,\hat{b}_j)}, \nonumber
\\ & = \eta_{M, N}(q\,|\,a,\,b) \,\frac{\eta_{M,
N}(a_i\,|\,a,\,b)}{\eta_{M-1, N-1}(b_j\,|\,\hat{a}_i, \hat{b}_j)}
\frac{\eta_{M-1, N-1}(q+b_j\,|\,\hat{a}_i, \hat{b}_j)}{\eta_{M-1,
N-1}(q\,|\,\hat{a}_i,\,\hat{b}_j)}
\end{align}
by \eqref{fe2}. It remains to notice that \eqref{fe3} and
\eqref{auxidentity} imply
\begin{equation}
\eta_{M, N}(a_i\,|\,a,\,b) = \eta_{M-1, N-1}(b_j\,|\,\hat{a}_i,
\hat{b}_j).
\end{equation}
\end{proof}
\begin{proof}[Proof of Corollary \ref{moments}]
Repeated application of \eqref{fe1} gives the identity
\begin{equation}
\eta_{M,N}(q+k a_i\,|\,a, b) = \eta_{M,N}(q\,|\,a, b)
\exp\Bigl(-\sum_{l=0}^{k-1} \bigl(\mathcal{S}_N
L_{M-1}\bigr)(q+la_i\,|\,\hat{a}_i, b)\Bigr).
\end{equation}
Equations \eqref{posmom} and \eqref{negmom} now follow by letting
$q=0$ and $q=-ka_i,$ respectively.
\end{proof}
\begin{proof}[Proof of Corollary \ref{laplace}]
The absolute convergence of the series in \eqref{powerseries}
follows from Theorem \ref{Asymptotics} and \eqref{posmom}. Its
equality to the Laplace transform is the general property of the
power series of positive integral moments, confer Section 7.6 of
\cite{Feller}.
\end{proof}
\begin{proof}[Proof of Corollary \ref{Rama}]
The proof is a direct corollary of Ramanujan's Master Theorem,
confer \cite{Ramanujan}. It is only sufficient to note that
$\eta_{M,N}(q\,|\,a, b)$ is analytic over $\Re(q)>-b_0$ and, by
Theorem \ref{Asymptotics}, satisfies Hardy's growth conditions
there.
\end{proof}
\begin{proof}[Proof of Theorem \ref{Factorization}]
It is sufficient to verify \eqref{infinprod1} as \eqref{infinprod2}
and \eqref{infinprod3} are equivalent by \eqref{fe2} and
\eqref{fe3}. Let $\Re(q)>-b_0.$ Consider the product on the
right-hand side of \eqref{infinprod1} and reduce it by means of the
L\'evy-Khinchine representation for $\eta_{M-1, N}(q\,|\,\hat{a}_i,
b).$
\begin{equation}
\prod\limits_{k=0}^L \eta_{M-1,N}(q\,|\,\hat{a}_i, b_0+ka_i) =
\exp\Bigl(\int\limits_0^\infty \frac{dt}{t} (e^{-qt}-1)\frac{e^{-b_0
t}\prod_{j=1}^N (1-e^{-b_j t})}{\prod_{j\neq i}^M (1-e^{-a_j t})}
\frac{1-e^{-a_i t(L+1)}}{1-e^{-a_i t}}\Bigr).
\end{equation}
Letting $L\rightarrow \infty,$ we obtain 
\eqref{LKH} by dominated convergence. 
As $\eta_{M,N}(q\,|\,a, b)$ is holomorphic over
$q\in\mathbb{C}-(-\infty, -b_0],$ \eqref{infinprod1} holds there
by analytic continuation.
\end{proof}
\begin{proof}[Proof of Corollary \ref{solution}]
By the infinite product representation in \eqref{infinprod2}, we
have
\begin{align}
\eta_{M,N}(q+a_i\,|\,a) & =
\lim\limits_{K\rightarrow\infty}\Bigl[\prod\limits_{k=0}^K
\frac{\eta_{M-1,N}(q+(k+1) a_i\,|\,\hat{a}_i, b)}{\eta_{M-1,N}(k
a_i\,|\,\hat{a}_i, b)}\Bigr], \nonumber \\
& = \lim\limits_{K\rightarrow\infty}
\Bigl[\frac{\eta_{M-1,N}\bigl((K+1) a_i\,|\,\hat{a}_i,
b\bigr)}{\eta_{M-1,N}(q\,|\,\hat{a}_i, b)} \prod\limits_{k=0}^{K+1}
\frac{\eta_{M-1,N}(q+k a_i\,|\,\hat{a}_i, b)}{\eta_{M-1,N}(k
a_i\,|\,\hat{a}_i, b)}\Bigr], \nonumber \\
& = \eta_{M, N}(q\,|\,a, b) \exp\bigl(-(\mathcal{S}_N
L_{M-1})(q\,|\,\hat{a}_i, b)\bigr)
\end{align}
by \eqref{ourasym}. 
Incidently, the same argument shows that \eqref{infinprod3} is the
solution to \eqref{fe4}.
\end{proof}
\begin{proof}[Proof of Theorem \ref{Reduction}]
It is sufficient to show that we have the identity
\begin{equation}\label{auxredeq}
\eta_{M, N}(q\,|\,a, b_j=n\,a_i) = \prod_{k=0}^{n-1} \eta_{M-1,
N-1}\bigl(q\,|\,\hat{a}_i, b_0+ka_i,\, \hat{b}_j\bigr).
\end{equation}
This is done by induction on $n.$ If $n=1,$ then the result follows
from \eqref{LKH}. Assume \eqref{auxredeq} holds for $n-1,$
\emph{i.e.} $b_j=(n-1)\,a_i.$ By \eqref{fe4}, we have
\begin{align}
\eta_{M, N}(q\,|\,a,\,b_j=(n-1)\,a_i+a_i) & = \eta_{M,
N}(q\,|\,a,\,b_j=(n-1)\,a_i)\,
\frac{\eta_{M-1, N-1}(q+b_j\,|\,\hat{a}_i,\,\hat{b}_j)} {\eta_{M-1,
N-1}(b_j\,|\,\hat{a},\,\hat{b}_j)}, \nonumber \\
& = \prod_{k=0}^{n-1} \eta_{M-1, N-1}\bigl(\hat{a}_i, b_0+ka_i,\,
\hat{b}_j\bigr),
\end{align}
by the induction assumption and \eqref{fe2}.
\end{proof}
\begin{proof}[Proof of Theorem \ref{Momentsaspecial}]
In the case of $a_i=1$ we can write \eqref{posmom} in the form
\begin{equation}\label{momentfunceq}
\bigl(\mathcal{S}_N L_{M}\bigr)(k\,|\, b) = \bigl(\mathcal{S}_N
L_{M}\bigr)(0\,|\, b)-\sum\limits_{l=0}^{k-1} \bigl(\mathcal{S}_N
L_{M-1}\bigr)(l\,|\,b).
\end{equation}
By repeated application of \eqref{momentfunceq} and the identity
\begin{equation}
\sum\limits_{i_1=0}^{n-1} \sum\limits_{i_2=0}^{i_1-1}\cdots
\sum\limits_{i_k=0}^{i_{k-1}-1} 1 = \binom{n}{k},
\end{equation}
we obtain by induction on $k=0\cdots M-1,$
\begin{align}
\bigl(\mathcal{S}_N L_{M}\bigr)(n\,|\, b) - \bigl(\mathcal{S}_N
L_{M}\bigr)(0\,|\, b) & = \sum\limits_{i=1}^k (-1)^i \binom{n}{i}
\bigl(\mathcal{S}_N L_{M-i}\bigr)(0\,|\, b) + \nonumber \\ & +
(-1)^{k+1} \sum\limits_{i_1=0}^{n-1} \sum\limits_{i_2=0}^{i_1-1}
\cdots \sum\limits_{i_{k+1}=0}^{i_{k}-1} \bigl(\mathcal{S}_N
L_{M-k-1}\bigr)(i_{k+1}\,|\, b).
\end{align}
The result follows by letting $k=M-1$ and recalling that
$L_0(w)=-\log(w).$
\end{proof}

\section{Conclusions}
We constructed and studied the main properties of a novel class of
what we called Barnes beta probability distributions $\beta_{M,N}(a,
b).$ $\beta_{M,N}(a, b)$ is a distribution on $(0, 1]$ that is
parameterized by two sets of positive real numbers
$a=(a_1,\cdots,a_M)$ and $b=(b_0,\cdots,b_N)$ and defined by its
Mellin transform $\eta_{M,N}(q\,|\,a, b).$ We gave four different
representations of
$\eta_{M,N}(q\,|\,a, b).$ 
The defining representation is in the form of a product of ratios of
Barnes multiple gamma functions $\Gamma_M(w\,|\,a),$ thereby
generalizing the classic beta distribution. We used Malmst\'en-type
formula of Ruijsenaars for $\log\Gamma_M(w\,|\,a)$ to show that
$-\log\beta_{M,N}(a, b)$ is infinitely divisible on $[0,\infty)$ by
deriving its L\'evy-Khinchine form and thus giving the $2$nd
representation of $\eta_{M,N}(q\,|\,a, b).$ 
The L\'evy-Khinchine form allowed us to show that
$-\log\beta_{M,N}(a, b)$ is compound Poisson if $M<N$ and absolutely
continuous if $M=N.$
We used the functional equation of $\Gamma_M(w\,|\,a)$ to derive a
functional equation for $\eta_{M,N}(q\,|\,a, b)$ and thus to compute
the integral moments of $\beta_{M,N}(a, b)$ in the form of
Selberg-type products of $\Gamma_{M-1}(w\,|\,a).$ The Ruijsenaars
form of the asymptotic expansion of $\log\Gamma_M(w\,|\,a)$ in the
limit $w\rightarrow\infty$ gave us the asymptotic of
$\eta_{M,N}(q\,|\,a, b)$ in the limit $q\rightarrow\infty.$ We used
this asymptotic in the case of $M<N$ to give a probabilistic proof
of an inequality involving multiple gamma functions. We also used it
to show the convergence of the power series of moments of
$\beta_{M,N}(a, b)$ to the Laplace transform. The resulting series
of Selberg-type products of $\Gamma_{M-1}(w\,|\,a)$ is therefore
positive, giving an interesting application of the general theory.
We related this series to $\eta_{M,N}(q\,|\,a, b)$ by Ramanujan's
Master Theorem, giving the $3$rd representation of
$\eta_{M,N}(q\,|\,a, b).$ We solved the functional equation of
$\eta_{M,N}(q\,|\,a, b)$ in the form of Shintani-type infinite
products of $\eta_{M-1,N}(q\,|\,a, b)$ and
$\eta_{M-1,N-1}(q\,|\,a, b),$ resulting in the $4$th representation. 
Finally, we established several symmetries of the Mellin transform
in the form of functional equations relating $\eta_{M,N}(q\,|\,a,
b)$ to $\eta_{M,N-1}(q\,|\,a, b),$ $\eta_{M-1,N}(q\,|\,a, b),$ and
$\eta_{M-1,N-1}(q\,|\,a, b).$


We illustrated our theory of Barnes beta distributions with several
examples. First, we considered two special cases of $a$ and $b.$
If $b_N$ is an integer multiple of $a_M,$ $\beta_{M,N}(a, b)$
decomposes into a product of $\beta_{M-1,N-1}(a, b)s.$ If $a_i=1$
for all $i,$ the moments of $\beta_{M,N}(a, b)$ are given by a
multiple product generalizing the moments of the classic beta
distribution. Second, in some elementary cases of small $M$ and $N$
we computed the density and weight at 1 of $\beta_{M,N}(a, b)$
exactly. Our main
non-elementary example is $\beta_{2,2}(a, b),$ 
in which case our formulas simplify to expressions involving Euler's
gamma function.

The main area of applications that we considered in this paper is
the probabilistic theory of the Selberg integral. We constructed a
distribution with the property that its moments are given by
Selberg's formula and decomposed it into a product of
$\beta_{2,2}^{-1}(a, b)s.$ This construction leads to a
probabilistic interpretation of the Selberg integral.




\end{document}